%2multibyte Version: 5.50.0.2890 CodePage: 1253
%% This document created by Scientific Word (R) Version 3.0

\documentclass{article}%
\usepackage{graphicx}
\usepackage{amsmath}
\usepackage{amsfonts}
\usepackage{amssymb}%
\setcounter{MaxMatrixCols}{30}
%TCIDATA{OutputFilter=latex2.dll}
%TCIDATA{Version=5.50.0.2890}
%TCIDATA{Codepage=1253}
%TCIDATA{CSTFile=LaTeX article (bright).cst}
%TCIDATA{Created=Fri May 05 00:18:06 2000}
%TCIDATA{LastRevised=Tuesday, December 01, 2020 19:14:20}
%TCIDATA{<META NAME="GraphicsSave" CONTENT="32">}
%TCIDATA{<META NAME="SaveForMode" CONTENT="1">}
%TCIDATA{BibliographyScheme=Manual}
%TCIDATA{<META NAME="DocumentShell" CONTENT="Journal Articles\Standard LaTeX Article">}
%TCIDATA{Language=American English}
%BeginMSIPreambleData
\providecommand{\U}[1]{\protect \rule{.1in}{.1in}}
%EndMSIPreambleData
\newtheorem{theorem}{Theorem}

\newtheorem{definition}[theorem]{Definition}

\newtheorem{lemma}[theorem]{Lemma}

\newtheorem{proposition}[theorem]{Proposition}

\newenvironment{proof}[1][Proof]{\textbf{#1.} }{\  \rule{0.5em}{0.5em}}
\begin{document}

\title{\textbf{Essential closed surfaces and finite coverings of negatively curved
cusped 3-manifolds }}
\author{Charalampos Charitos}
\maketitle

\begin{abstract}
The existence of essential clsoed surfaces is proven for finite coverings of
3-manifolds that are triangulated by finitely many topological ideal
tetrahedra and admit a regular, negatively curved, ideal structure.\bigskip

$\boldsymbol{Keywords}$ Incompressible surfaces - Ideal tetrahedra -
$CAT(0)$-spaces\smallskip

$\boldsymbol{Mathematics}$ $\boldsymbol{Subject}$ $\boldsymbol{Classification}%
$ 57M50 - 57M10

\end{abstract}

\section{ Introduction}

In his pioneering work \cite{Thurston}, Thurston constructed complete
hyperbolic structures of constant negative curvature equal to $-1$ on the
complement of certain knots and links of $\mathbb{S}^{3},$ by realizing them
as a finite union of regular, hyperbolic ideal tetrahedra. Subsequently, it
was proven that the interior of compact hyperbolic 3-manifolds with geodesic
boundary can be triangulated by hyperbolic ideal polyhedra and in some cases,
by hyperbolic ideal tetrahedra \cite{Fujii}, \cite{Kojima}. Inspired by these
works, as well as from other relative works (see for example \cite{Frigerio},
\cite{Frigerio-Petronio} and the references in them), in the present paper a
special class of non-compact, triangulated 3-manifolds $M$ is defined. The
manifolds $M$ are obtained by gluing along their faces a finite family of
topological ideal tetrahedra $(D_{i})_{i\in I},$ where each $D_{i}$ is
homeomorphic to a 3-simplex in $\mathbb{R}^{3}$ with its vertices removed. On
$M$ a unique metric $d:M\times M\rightarrow \mathbb{R}$ is defined such that,
each tetrahedron $D_{i}$ equipped with the induced metric is isometric to the
regular, hyperbolic ideal tetrahedron of $\mathbb{H}^{3}.$ The metric $d$ will
be referred to as \textit{regular, ideal structure} of $M$ while $M$ equipped
with such a structure $d$ will be called \textit{regular, cusped 3-manifold}.
Furthermore, if the sum of the dihedral angles around each edge of a regular,
cusped manifold $M$ is $\geq2\pi$ the structure $d$ will be referred to as
\textit{regular, negatively curved, ideal structure. }A \textit{linking
surface} is a closed normal surface contained in a neighborhood of some cusp
of $M,$ see Definition \ref{linking surface} below. \textit{ }

Considering 3-manifolds $M$ which admit an ideal triangulation, that is, a
triangulation by topological ideal tetrahedra, our goal is to find in $M$
essential closed surfaces, i.e. orientable incompressible surfaces which are
not parallel to a linking surface of $M.$ This will be done using in an
essential way the geometry of negatively curved, ideal structures. Let us
recall here that Thurston, who had constructed a hyperbolic structure on the
figure-8 knot complement $M_{8}$ by considering a triangulation by regular
ideal hyperbolic tetrahedra on it, had also proved in Theorem 4.11 of his
notes \cite{Thurston's Notes} that all incompressible surfaces on $M_{8}$ are
linking surfaces. Therefore, in order to find essential closed surfaces in a
cusped 3-manifold, or more generally, in a closed 3-manifold $M$ with infinite
fundamental group, we are obliged to pass to a finite covering of $M.$

Recall also that if $M$ is a compact, connected irreducible 3-manifold with
non-empty incompressible boundary and if $M$ is not covered by a product
$F\times I,$ where $F$ is a closed orientable surface, then there exists a
finite covering of $M$ containing an essential closed surface i.e. a closed,
incompressible non-boundary parallel surface \cite{CLR}. This result, as well
as our main theorem in the present work, can be considered as seminal results
of the famous virtual Haken conjecture which is attributed to Waldhausen
\cite{Wald} and which was proven by I. Agol \cite{Agol} using significant
works of many mathematicians. Corollary 1.1 of \cite{CLR} is a more general
result than Theorem \ref{main} below. However, the proofs of \cite{CLR} are
based on Thurston's hyperbolization theorem while the proof of our result is
direct, constructive and rather elementary.

Now, we assume that $M$ is triangulated by finitely many topological ideal
tetrahedra and obviously any finite covering space $\widetilde{M}$ of $M$ is
naturally equipped with such an ideal triangulation. Hence the normal surfaces
in $M$ or in $\widetilde{M}$ considered below refer to these ideal triangulations.

The main theorem of this paper is the following.

\begin{theorem}
\label{main}Let $M$ be an orientable 3-manifold triangulated by finitely many
topological ideal tetrahedra. If $M$ admits a regular, negatively curved,
ideal structure\textit{, then }there exists a finite covering space
$\widetilde{M}$ of $M$ containing an essential closed surface.
\end{theorem}

The proof of this theorem is obtained by combining two basic results. The
first one says that there exists a finite covering space $\widetilde{M}$ of
$M$ containing a closed normal surface which is not isotopic to a linking
surface, see Theorem \ref{existence normal surface}. The second one says that
if $M$ admits a regular, negatively curved, ideal structure then any normal
closed surface in $M$ is essential, see Theorem \ref{incompressible surface}.

\section{Definitions and Preliminaries}

Let $\overline{D}$ be a \textit{topological tetrahedron}, that is, a
topological space homeomorphic to the standard 3-simplex $\Delta^{3}$ in
$\mathbb{R}^{4},$ via a homeomorphism $f:\overline{D}\rightarrow \Delta^{3}.$
The images of the vertices, edges and faces of $\Delta^{3},$ under $f^{-1},$
will be called vertices, edges and faces respectively of $\overline{D}.$ A
\textit{topological ideal tetrahedron} $D$ is a topological tetrahedron
$\overline{D}$ with its vertices removed. Thus, the edges and the faces of $D$
are the edges and the faces of $\overline{D}$ without its vertices.

\begin{definition}
\label{def triangulation}A triangulated ideal 3-manifold $M$ is a non-compact,
orientable, topological 3-manifold equipped with two finite sets $\mathcal{D}$
and $\mathcal{F}$ such that:

(1) Each element $D_{i}\in \mathcal{D}$ is a topological ideal tetrahedron.

(2) Each element $f\in \mathcal{F}$ is a homeomorphism $f:A\rightarrow B,$
where $A$ and $B$ are distinct faces of disjoint tetrahedra $D_{i},$ $D_{j}$
belonging to $\mathcal{D}$ and for each face $A$ of a tetrahedron $D_{i}%
\in \mathcal{D}$, there exists an $f\in \mathcal{F}$ and a face $B$ of some
tetrahedron $D_{j}\in \mathcal{D},$ with $f$ sending $A$ to $B.$ The elements
of $\mathcal{F}$ are called the \textit{gluing maps} of $M.$

(3) As a topological space, $M$ is the quotient space of the disjoint union of
all tetrahedra $(D_{i})_{i\in I}$ in $\mathcal{D}$ by the equivalence relation
which identifies two faces $A,$ $B$ of tetrahedra $D_{i},$ $D_{j}$
respectively of $\mathcal{D}$, whenever these faces are related by a map
$f:A\rightarrow B$ belonging to the collection $\mathcal{F}$.

The subdivision of $M$ into tetrahedra of $\mathcal{D}$ will be called a
topological ideal triangulation of $M$ and will be also denoted by
$\mathcal{D}.$ An edge (resp. face) of some $D_{i}\in \mathcal{D}$ will be
called an edge (resp. face) of the triangulation $\mathcal{D}$. The deleted
vertices of the tetrahedra $D_{i}\in \mathcal{D}$ will be called ideal vertices
of $M.$

Thus, the 3-simplexes of $\mathcal{D}$ are the tetrahedra $D_{i},$ the
2-simplexes of $\mathcal{D}$ are their faces while the 1-simplexes of
$\mathcal{D}$ are the edges of $D_{i}$ which are also called edges of
$\mathcal{D}$. Let us denote by $\mathcal{D}^{(i)},$ $i=1,2,3$ the
$i$-skeleton of $\mathcal{D}$.

An edge $e$ of $\mathcal{D}$ is said to have index $k,$ $k\geq2,$ if for each
point $x\in e$ there exists a closed neighborhood of $x$ in $\mathcal{D}%
^{(2)}$ which is homeomorphic to $k$ closed half discs glued together along
their diameter.

The index of $e$ with respect to the triangulation $\mathcal{D}$ will be
denoted by $i_{\mathcal{D}}(e).$
\end{definition}

Subsequently, we assume that the topological ideal triangulation $\mathcal{D}$
is fixed. A length metric $d$ can be defined on $M$ as follows:

Let $T_{0}$ be a regular hyperbolic ideal tetrahedron of the hyperbolic space
$\mathbb{H}^{3}.$ $T_{0}$ has all his dihedral angles equal to $\pi/3.$ As
each tetrahedron $D_{i}$ of $\mathcal{D}$ is homeomorphic to $T_{0}$ via an
homeomorphism $h_{i},$ we may equip $D_{i}$ with a metric so that $h_{i}$ is
an isometry. Now, by assuming that the gluing maps of $\mathcal{F}$ are
isometries between hyperbolic ideal triangles a unique length metric $d$ can
be defined naturally on the whole manifold $M.$ The triangulation of $(M,d)$
by regular hyperbolic ideal tetrahedra will be also denoted by $\mathcal{D}$
and will be called a \textit{regular, ideal triangulation; }the structure $d$
defined on $M$ will be referred to as \textit{regular, ideal structure.
\smallskip}

\noindent \textbf{Notation} \textit{Henceforth, it will be assumed that }%
$M$\textit{ has a fixed topological ideal triangulation }$\mathcal{D}$\textit{
which gives rise to a unique regular, ideal structure }$d.$\textit{ The
manifold }$M$\textit{ equipped with the regular, ideal structure }$d$\textit{
will be denoted by }$M_{d}$\textit{.\smallskip}

Gluing the hyperbolic ideal tetrahedra $D_{i}$ in order to build $M_{d},$ the
ideal vertices of $D_{i}$ are separated into finite classes so that the ideal
vertices of each class match together and form the \textit{cusps} of $M_{d}%
.$\textit{ } Thus, $M_{d}$ equipped with the structure $d$ will be
called\textit{ regular, cusped 3-manifolds}.
%%%%%%%%%%%%%%%%%%%%%%%%%%%%%%%%%%%%%%%%%
\begin{figure}[ptb]
\begin{center}
\includegraphics[scale=0.6]{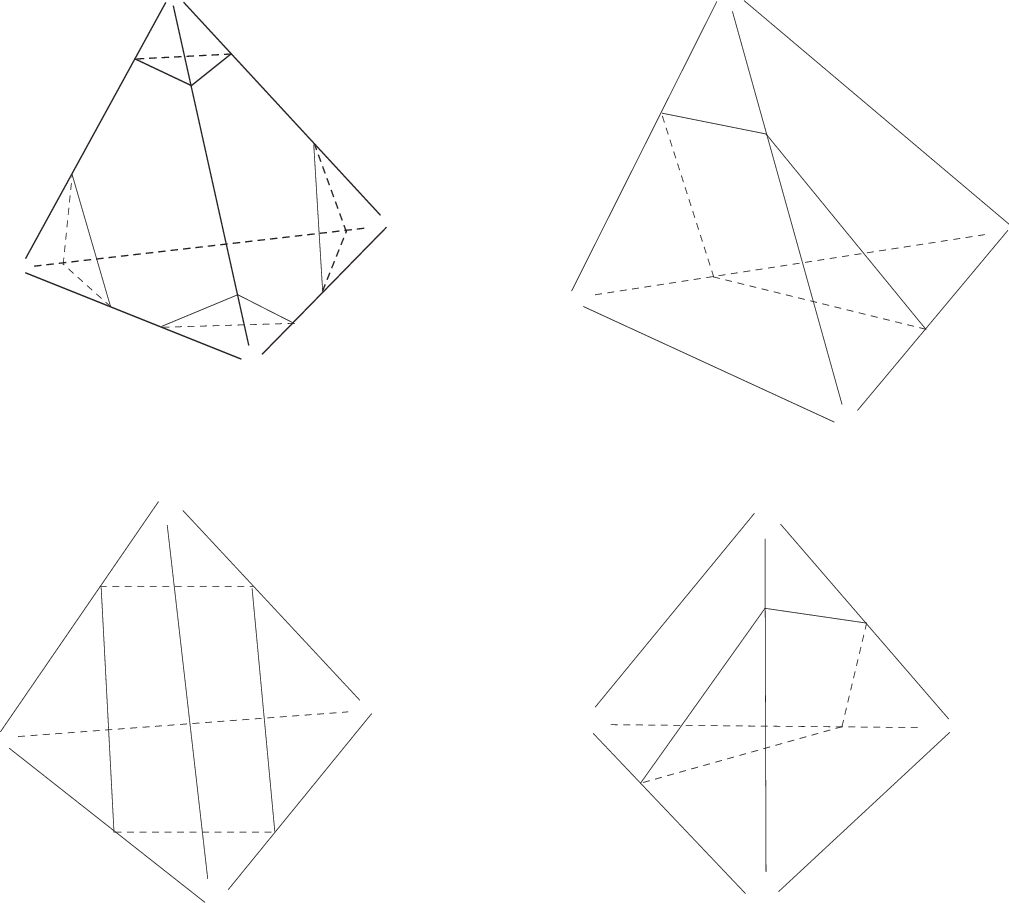}
\end{center}
\caption{The seven disc types.}%
\end{figure}
%%%%%%%%%%%%%%%%%%%%%%%%%%%%%%%%%%%%%%%%%%%%%%%%%%
The manifold $M_{d}$, or generally a metric space, is called \textit{geodesic}
metric space if for any two points $p,q\in M_{d}$ there is a path, say
$[p,q],$ joining these points and whose length is equal to the distance
$d(p,q).$

The manifold $M_{d}$ has the following basic properties:

\begin{itemize}
\item $M_{d}$\textit{ is a complete, geodesic metric space}.
\end{itemize}

In fact, since all tetrahedra are regular, for each cusp $v$ of $M_{d},$ all
horospherical sections in a neighborhood of $v$ of each tetrahedron $D_{i}$
which has $v$ as an ideal vertex, fit together forming a closed surface
$S_{v}$ which is the geometrical link of $v$ in $M_{d}.$ Lemma 3.1 of
\cite{Char-Pap} or Theorem 3.4.23 of \cite{Thurston} implies that $M_{d}$ is a
complete space. On the other hand, the manifold $M_{d}$ is a locally compact,
complete length space and this implies that $M_{d}$ is a geodesic space (see
Hopf--Rinow Theorem in Proposition 3.7, p. 35 of \cite{Bridson-Haefliger}%
).\smallskip

Now, let $e$ be an edge of the regular ideal triangulation $\mathcal{D}$ of
$M_{d}.$ If $\theta(e,D_{i})$ is the dihedral angle of $D_{i}$ around $e$ then
$\theta(e,D_{i})=\pi/3.$ Let us denote by $\theta_{\mathcal{D}}(e)$ the sum of
all dihedral angles $\theta(e,D_{i})$ over all tetrahedra $D_{i}$ which share
$e$ as a common edge. Thus, $\theta_{\mathcal{D}}(e)\geq2\pi$ if and only if
$i_{\mathcal{D}}(e)\geq6.$

\begin{itemize}
\item $M_{d}$\textit{ has curvature less than or equal to }$-1$\textit{ i.e.
}$M_{d}$\textit{ satisfies locally the }$CAT(-1)$\textit{ inequality}
\textit{provided that }$i_{\mathcal{D}}(e)\geq6.$
\end{itemize}

In fact, since $\theta_{\mathcal{D}}(e)\geq2\pi$ for each edge $e$ of $M_{d}$,
it follows from \cite{Paulin}, Theorem 3.13, that $M_{d}$ has curvature less
than or equal to $-1.$ For this reason $M_{d}$ will be called a\textit{
regular, negatively curved cusped manifold}. The universal covering
$\widetilde{M_{d}}$ of $M_{d}$ satisfies globally the $CAT(-1)$ inequality, as
it follows from a theorem of Cartan--Hadamard--Aleksandrov--Gromov (see
Theorem 2.21 in \cite{Paulin}). Furthermore, the manifold $\widetilde{M_{d}}$
is homeomorphic to $\mathbb{R}^{3}$ and its visual boundary $\partial
\widetilde{M_{d}},$ which is defined via geodesic rays emanating from a base
point, is homeomorphic to the 2-sphere $S^{2}$ (see for example
\cite{Bridson-Haefliger}, Example (3), p. 266). \smallskip

For a regular, negatively curved, cusped manifold $M_{d},$ if $e$ is an edge
of $\mathcal{D}$ and if $\theta_{\mathcal{D}}(e)>2\pi$ then $e$ will be called
a \textit{singular edge} of $\mathcal{D}.$ If $\theta_{\mathcal{D}}(e)=2\pi$
for each $e\in \mathcal{D}^{(1)}$ then $M_{d}$ has everywhere constant
curvature $-1.$ In \cite{Thurston}, \cite{Fujii}, \cite{Kojima}, there are
examples of cusped 3-manifolds of constant curvature $-1$ which admit an ideal
triangulation. Allowing some edges $e$ to have $\theta_{\mathcal{D}}(e)>2\pi$
we may enrich the class of negatively curved cusped manifolds. For instance in
example 6.3 of \cite{Char-Pap}, such a manifold is constructed. Also
Proposition \ref{finite branched covering} below shows that the class of these
manifolds is really rich.
%%%%%%%%%%%%%%%%%%%%%%%%%%%%%%%%%%%%%%%%%%%
\begin{figure}[ptb]
\begin{center}
\includegraphics[scale=0.58]{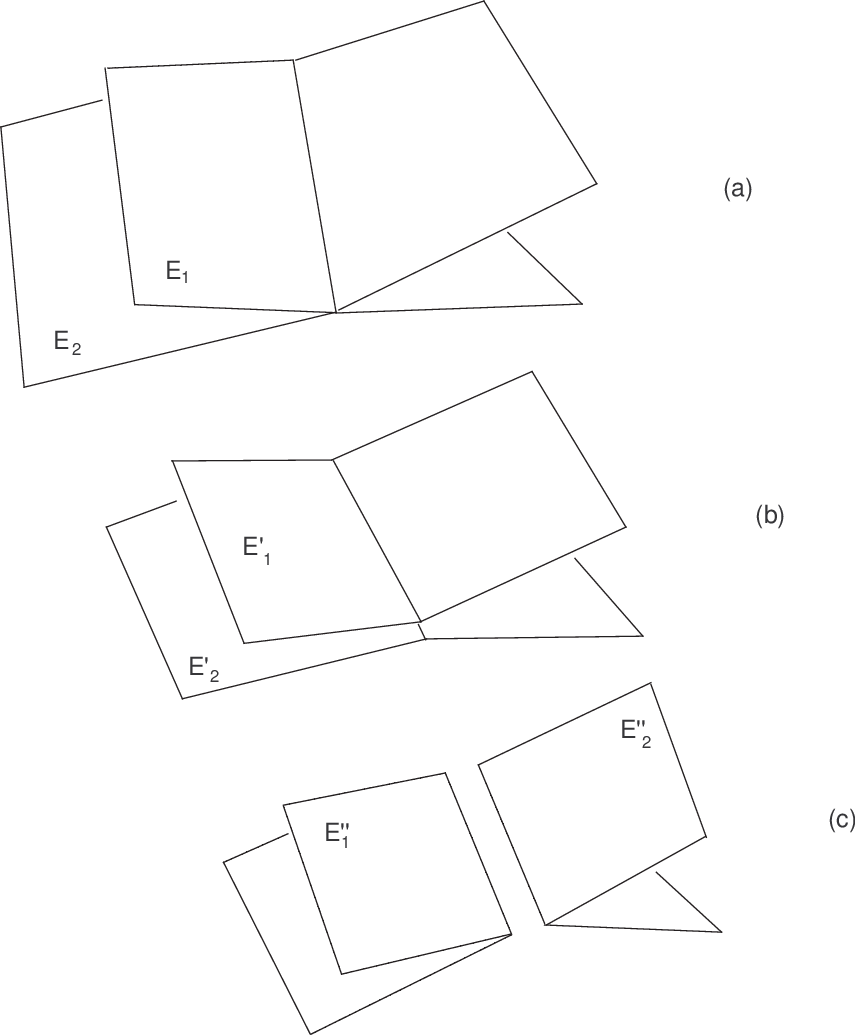}
\end{center}
\caption{Normal exchange of disc types.}%
\end{figure}
%%%%%%%%%%%%%%%%%%%%%%%%%%%%%%%%%%%%%%%%%%%%%%%%%%%%%%%%5
We recall now basic facts about normal surface theory which is mainly due to
Haken. A detailed exposition can be found in \cite{Jaco} or \cite{Hemion} and
similar definitions can be given for the triangulated ideal manifold $M.$
Usually, standard definitions are given either in the $PL$ or in the smooth
category since the sense of transversality is needed. In our context, and
without loss of generality we may assume that each tetrahedron of
$\mathcal{D}$ is hyperbolic and ideal and hence smooth. Thus, for the
existence of normal surfaces below, it is more convenient to work with $M_{d}$
which is triangulated by hyperbolic ideal tetrahedra.

There are seven kinds of special discs properly embedded in a tetrahedron
$D_{i}\in \mathcal{D}$ which are transverse to the faces of $D_{i}$ and which
are shown in Figure 1. These discs will be generally referred to as
\textit{disc types}. Thus, for each tetrahedron we have four triangular disc
types and three quadrilateral disc types.

Now, let $S$ be a closed surface equipped with a smooth structure. The term
\textit{smooth} for a map $f:S\rightarrow M_{d}$ will be used below in the
following sense: For each $x\in S$ with $f(x)\in \sigma,$ where $\sigma
\in \mathcal{D}^{(i)},$ $i=1,2,3,$ there exists a neighborhood $U_{\sigma}$ of
$f(x)$ in $\sigma$ such that the restriction $f_{|f^{-1}(U_{\sigma})\cap
S}:f^{-1}(U_{\sigma})\cap S\rightarrow \sigma$ is a smooth embedding.

A \textit{singular normal surface} in $M_{d}$ is a smooth map $f:S\rightarrow
M_{d}$ such that: $f$ is transverse to each simplex $\sigma \in \mathcal{D}%
^{(i)}$ and the intersection of $f(S)$ with each tetrahedron $\Delta_{i}$ of
$\mathcal{D}$ is a finite collection of discs types.

If $f$ is $1-1$ then the map $f:S\rightarrow M_{d}$ will be called
\textit{normal surface} and in this case $S$ will be identified with $f(S).$

\begin{definition}
\label{essential def}An essential closed surface in $M_{d}$ is an orientable
normal incompressible surface in $M_{d}$ which is not parallel to a linking surface.
\end{definition}

A well-known result, which is obviously valid for hyperbolic, ideal
triangulations, confirms that an incompressible surface $S$ in $M_{d}$ can be
isotoped to a normal surface with respect to $\mathcal{D}$ (see for example
Theorem 5.2.14 in \cite{Schultens}). The converse is not generally true for
compact triangulated 3-manifolds. However, from Theorem \ref{main} it results
that any normal closed surface in $M_{d}$ is essential provided that
$i_{\mathcal{D}}(e)\geq6$ for each edge $e$ of $\mathcal{D}.$

The following definitions are recalled from \cite{Jaco-Rubinstein}. Let $S$ be
a closed orientable surface. A \textit{normal homotopy} is defined to be a
smooth map $\xi:S\times \lbrack0,1]\rightarrow M_{d}$ so that for each fixed
$t\in \lbrack0,1]$ the surface $S_{t}$ given by $\xi_{|S\times \{t\}}$ is a
singular normal surface. $\xi$ is a \textit{normal isotopy} if in addition,
each $S_{t}$ is embedded. The \textit{normal homotopy class} $\mathcal{N}(f)$
of a normal or singular normal surface $f:S\rightarrow M_{d}$ is defined as
the set of all normal or singular normal surfaces $g:S\rightarrow M_{d}$ which
are normally homotopic to $f.$

\section{Properties of ideal triangulations}

In order to state our results we need some terminology.

\begin{definition}
\label{terminology triangulation} Let $X$ be a triangulated ideal 3-manifold
equipped with a topological ideal triangulation $\mathcal{X}.$

We will say that $\mathcal{X}$ has the \textit{property of the unique common
simplex } if for every two tetrahedra $D,$ $D^{\prime}$ of $\mathcal{X}$ one
of the following three cases can happen: (1) $D\cap D^{\prime}=\varnothing;$
(2) $D\cap D^{\prime}=e,$ where $e$ is an edge of $\mathcal{X};$ (3) $D\cap
D^{\prime}=F,$ where $F$ is an entire face of $\mathcal{X}.$

If $\partial X\neq \emptyset$ then each face of $\mathcal{X}$ which belongs to
two hyperbolic tetrahedra will be called \textit{interior face} of
$\mathcal{X}$, otherwise will be called \textit{exterior face} of
$\mathcal{X}$ or boundary \textit{face} of $X.$ Also, each edge of an exterior
face of $\mathcal{X}$ will be called \textit{exterior edge} of $\mathcal{X},$
otherwise it will be called \textit{interior edge} of $\mathcal{X}.$
\end{definition}

Following Definition \ref{def triangulation}, all the faces of the manifold
$M$ in Theorem \ref{main} are interior faces. Thus, $M$ will always be a
triangulated ideal 3-manifold equipped with a fixed topological ideal
triangulation $\mathcal{D}$ such that all faces of $\mathcal{D}$ are interior
faces i.e. $\partial M=\varnothing.\mathcal{\ }$ In Definition
\ref{def triangulation} we have also assumed that two disjoint faces of the
same tetrahedron of $\mathcal{D}$ are not glued together. Actually, this
assumption is redundant when we consider negatively curved, ideal structures
on $M.$

We will start this section with some topological results which concern the
covering and branched covering spaces of a triangulated ideal 3-manifold $M.$
In order to state our results we need some additional terminology.

First we need the following lemma.

\begin{lemma}
\label{special triangulation}There exists a triangulated ideal manifold $B$
equipped with a topological ideal triangulation $\mathcal{B}$ consisting of
the ideal tetrahedra $D_{i}$ of $\mathcal{D},$ such that:

(1) $B$ is homeomorphic to the closed unit 3-ball $D^{3}$ with finitely many
points removed from $\partial D^{3}.$

(2) $M$ is obtained from $B$ by gluing pairwise, via gluing maps of $M,$ the
boundary faces of $B.$

(3) All the edges of $\mathcal{B}$ are exteriors edges.

(4) $\mathcal{B}$ has the \textit{property of the unique common simplex.}
\end{lemma}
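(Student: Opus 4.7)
The plan is to construct $B$ by ``cutting open'' $M$ along a maximal collection of face-identifications that leaves a simply connected space, while preserving enough gluings to reassemble $M$. Let $G$ be the dual multigraph of $\mathcal{D}$ whose vertices are the tetrahedra $D_{i}$ and whose edges are the gluing maps in $\mathcal{F}$. Choose a spanning tree $T$ of $G$ and define $B$ to be the quotient of $\bigsqcup_{i\in I} D_{i}$ by those gluing maps of $\mathcal{F}$ corresponding to edges of $T$. The gluing maps of $\mathcal{F}$ not in $T$ then identify pairs of boundary faces of $B$ to recover $M$, which is property (2).

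For property (1), I would proceed by induction on the number of tetrahedra, ordering them so that each newly added tetrahedron is attached to the partial union by a single edge of $T$. A single ideal tetrahedron is homeomorphic to $D^{3}$ with its four vertices removed, and attaching a further tetrahedron along a triangular face is homeomorphic to gluing a 3-ball to a 3-ball along a 2-disc in the boundary; inductively the result is a 3-ball with finitely many boundary points, namely the ideal vertices of $\mathcal{B}$, deleted.

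For property (3), fix an edge $e$ of $\mathcal{D}$ and consider its \emph{wheel}, the subgraph of $G$ whose vertices are the tetrahedra containing $e$ and whose edges are the face-identifications around $e$; since $M$ is a manifold this wheel is a cycle. The edges of $\mathcal{B}$ lying above $e$ correspond to the connected components of the subgraph of the wheel obtained by keeping only the edges that belong to $T$, and since $T$ is acyclic these components are proper sub-paths of the cycle. Each such sub-path has at least one free end, i.e.\ a face containing $e$ that was not glued in $T$; this face is therefore exterior in $\mathcal{B}$, so the edge of $B$ is exterior.

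The main obstacle is property (4). Given distinct $D,D'\in\mathcal{B}$, let $D=D_{i_{0}},D_{i_{1}},\dots,D_{i_{k}}=D'$ be the unique path in $T$, with consecutive gluing faces $F_{1},\dots,F_{k}$. A shared edge of $D$ and $D'$ in $B$ must come from an $M$-edge $e$ whose wheel contains a path in $T$ from $D$ to $D'$; by uniqueness of tree paths this path is $F_{1},\dots,F_{k}$, and in particular every $F_{j}$ must contain $e$. If $k=1$ this forces $e\subset F_{1}$, so the only shared simplex is $F_{1}$ itself. If $k\geq 2$, then at each intermediate tetrahedron $D_{i_{j}}$ the two distinct tree faces $F_{j}$ and $F_{j+1}$ share exactly one edge of $D_{i_{j}}$, so $e$ is uniquely determined by the tree path; consequently $D\cap D'$ is either empty or this single edge. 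In all cases $D\cap D'$ is empty, a single edge, or a single face, verifying the unique common simplex property.
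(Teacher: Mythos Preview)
Your argument is correct and is essentially the same construction as the paper's: the paper builds $B$ by gluing the tetrahedra one at a time, each along a single boundary face, which is precisely the spanning-tree quotient you describe (the inductive order of attachment traces out a spanning tree of the dual graph). The paper simply asserts that each $B_{k}$ is a punctured ball and that the unique common simplex property holds, whereas you supply the details; in particular your wheel/tree-path analysis for properties~(3) and~(4) makes explicit what the paper leaves to the reader.
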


\begin{proof}
Let $D_{1}$ be an arbitrary ideal tetrahedron of $\mathcal{D}.$ There is a
tetrahedron $D_{2}$ of $\mathcal{D}$ which is glued to $D_{1},$ via a gluing
map of $M,$ and we set $B_{2}=D_{1}\cup D_{2}.$ To the topological ideal
polyhedron $B_{2}$ we glue a third ideal tetrahedron $D_{3}$ of $\mathcal{D}$,
disjoint from the previous ones $D_{1},D_{2},$ along a boundary face of
$B_{2}.$ Note that the gluing of $B_{2}$ with $D_{3},$ as well as, all the
gluings below, are always performed via the gluing maps of $M.$

By induction, to the triangulated ideal polyhedron $B_{k-1}=B_{k-2}\cup
D_{k-1},$ where $D_{k-1}$ is a tetrahedron of $\mathcal{D},$ we glue a new
tetrahedron $D_{k}$ of $\mathcal{D}$ which is disjoint from $D_{1}%
,..,D_{k-1}.$ From our construction it follows that each $B_{k}$ is
homeomorphic to the closed unit 3-ball $D^{3}$ with finitely many points
removed from $\partial D^{3}$ and that the ideal triangulation of $B_{k}$ has
the property of the unique common simplex. On the other hand, since $M$
consists of finitely many ideal tetrahedra, there is some $n$ such that
$B=B_{n}$ satisfies all properties (1)-(4) of our lemma.
\end{proof}

\begin{proposition}
\label{finite covering} There is a finite covering space $N$ of $M$ and a
topological ideal triangulation $\mathcal{E}$ of $N$ such that $\mathcal{E}$
has the property of the unique common simplex.
\end{proposition}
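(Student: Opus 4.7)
The strategy is to build $N$ from multiple copies of the triangulated ball $B$ of Lemma \ref{special triangulation}, glued along boundary faces according to a permutation representation of $\pi_{1}(M)$ chosen via residual finiteness to separate all problematic identifications.

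More precisely, by Lemma \ref{special triangulation} we have $M=B/\!\!\sim$, where $\sim$ is generated by boundary-face identifications $\phi_{1},\ldots,\phi_{m}$, and the ideal triangulation $\mathcal{B}$ of $B$ already satisfies the property of the unique common simplex. Since $B$ is simply connected, each $\phi_{\alpha}$ determines a canonical element $[\phi_{\alpha}]\in\pi_{1}(M)$, and these elements generate $\pi_{1}(M)$. For any homomorphism $\rho:\pi_{1}(M)\to S_{k}$ with transitive image, the connected $k$-fold cover $N_{\rho}\to M$ can be built concretely by taking $k$ disjoint copies $B^{(1)},\ldots,B^{(k)}$ of $B$ and, for each $\alpha$, gluing $A_{\alpha}\subset B^{(i)}$ to $A'_{\alpha}\subset B^{(\rho([\phi_{\alpha}])(i))}$ via $\phi_{\alpha}$. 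The resulting lifted ideal triangulation $\mathcal{E}$ of $N_{\rho}$ has tetrahedra labelled by pairs $(D_{j},i)$ with $D_{j}\in\mathcal{D}$ and $i\in\{1,\ldots,k\}$.

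Failure of the unique common simplex property on $\mathcal{E}$ can only come from a pair of tetrahedra $D,D'\in\mathcal{D}$ sharing two distinct simplices $\sigma,\sigma'$ of $\mathcal{D}$ in $M$; to each such configuration I attach the element $\gamma_{\sigma,\sigma'}\in\pi_{1}(M)$ represented by the loop that starts at an interior point of $D$, crosses $\sigma$ into $D'$, and returns through $\sigma'$. Because $\mathcal{B}$ enjoys the unique common simplex property inside $B$, at least one of $\sigma,\sigma'$ must arise from a boundary identification of $B$, so $\gamma_{\sigma,\sigma'}$ is expressed as a nontrivial word in the generators $[\phi_{\alpha}]$; the simple connectedness of $B$ prevents this word from closing up ``for free'' inside $B$, hence $\gamma_{\sigma,\sigma'}$ is nontrivial in $\pi_{1}(M)$. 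The lifts of $D$ and $D'$ in $N_{\rho}$ share two simplices of $\mathcal{E}$ precisely when $\gamma_{\sigma,\sigma'}\in\ker\rho$, so it suffices to choose $\rho$ whose kernel avoids the finite set $\mathcal{L}=\{\gamma_{\sigma,\sigma'}\}\subset\pi_{1}(M)\setminus\{1\}$.

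Such a $\rho$ exists by Hempel's theorem that the fundamental group of every 3-manifold is residually finite: let $H\triangleleft\pi_{1}(M)$ be a finite-index normal subgroup disjoint from $\mathcal{L}$, let $\rho:\pi_{1}(M)\to\pi_{1}(M)/H$ be the quotient map, and set $N=N_{\rho}$ equipped with the lifted ideal triangulation $\mathcal{E}$. The principal obstacle in this outline is the nontriviality claim for each witness $\gamma_{\sigma,\sigma'}$; this demands a case-by-case analysis depending on whether $\sigma$ and $\sigma'$ are ``interior'' simplices of $B$ or arise from boundary identifications of $B$, and ultimately rests on Lemma \ref{special triangulation}(4) together with the rigidity afforded by the simply connected ball $B$.
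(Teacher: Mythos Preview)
Your route diverges from the paper's. The paper never invokes residual finiteness; instead it constructs $N$ by an explicit iterated doubling. Starting from $M_1=B$, it takes two copies and cross-glues them along the first boundary-face pair (sending $A_1'^{(1)}$ to $A_1''^{(2)}$ and $A_1''^{(1)}$ to $A_1'^{(2)}$), checks directly that the resulting $\mathcal D_2$ retains the unique common simplex property, then doubles again along the duplicated pair $A_2$, and so on; after $m$ steps one has a closed $N$ assembled from $2^m$ copies of $B$. This is entirely elementary, in keeping with the paper's stated aim of a ``direct and rather elementary'' argument. Hempel's theorem, by contrast, ultimately rests on geometrization, which is far deeper than anything the paper is trying to establish.

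More seriously, the nontriviality of $\gamma_{\sigma,\sigma'}$---which you correctly identify as the crux---is not proved, and the justification you offer does not work. Simple connectedness of $B$ shows only that $\gamma_{\sigma,\sigma'}$ is a nontrivial \emph{word} in the generators $[\phi_\alpha]$; it does nothing to prevent that word from being killed by the edge relations that present $\pi_1(M)$. Concretely, suppose $D$ and $D'$ share two faces $F_1,F_2$ whose common edge $e$ has $i_{\mathcal D}(e)=2$. Then your loop $\gamma_{F_1,F_2}$ is precisely the meridian of $e$, hence null-homotopic in $M$, so $\rho(\gamma_{F_1,F_2})=1$ for \emph{every} $\rho$ and no permutation representation can separate the two adjacencies. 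There is also a definitional gap: when $\sigma$ is an edge rather than a face, ``crossing $\sigma$ into $D'$'' does not single out a homotopy class of paths, so $\gamma_{\sigma,\sigma'}$ is not even well-defined. The case-by-case analysis you allude to would have to supply real geometric input, not just combinatorial bookkeeping, and in low-index situations it cannot succeed at all; the paper's hands-on doubling avoids this obstacle entirely.
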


\begin{proof}
From Lemma \ref{special triangulation} we may cut and open $M$ along
appropriate faces of $\mathcal{D}$ and obtain a manifold $M_{1}$ with boundary
faces $A_{i}^{\prime},$ $A_{i}^{\prime \prime},$ $i=1,..,m,$ such that:

(1) $M_{1}$ is connected and if we denote by $\mathcal{D}_{1}$ the ideal
triangulation induced on $M_{1}$ by $\mathcal{D}$ then $\mathcal{D}_{1}$ has
the property of the unique common simplex.

(2) The faces $A_{i}^{\prime},$ $A_{i}^{\prime \prime}$ result by cutting $M$
along the face $A_{i}$ of $\mathcal{D}$ thus, when $A_{i}^{\prime}$ is glued
back to $A_{i}^{\prime \prime}$ for each $i$ (following the same gluing map) we
obtain $M.$

Obviously, the faces $A_{i}^{\prime},$ $A_{i}^{\prime \prime}$ belong to
different tetrahedra of $\mathcal{D}_{1}.$

Actually, we may assume that $M_{1}$ is homeomorphic to a 3-ball with finitely
many points removed from its boundary but this information does not matter here.

Now, we consider two copies $M_{1}^{(1)}$ and $M_{1}^{(2)}$ of $M_{1}$ and let
us denote by $A_{1}^{\prime(1)},$ $A_{1}^{\prime \prime(1)}$ (resp.
$A_{1}^{\prime(2)},$ $A_{1}^{\prime \prime(2)})$ the faces of $M_{1}^{(1)}$
(resp. of $M_{1}^{(2)}),$ corresponding to $A_{1}^{\prime}$ and $A_{1}%
^{\prime \prime}.$ Then we glue $M_{1}^{(1)}$ and $M_{1}^{(2)}$ by identifying
the pair $(A_{1}^{\prime(1)},A_{1}^{\prime \prime(1)})$ with the pair
$(A_{1}^{\prime \prime(2)},A_{1}^{\prime(2)}).$ This means that $A_{1}%
^{\prime(1)}$ is identified with $A_{1}^{\prime \prime(2)}$ and $A_{1}%
^{\prime \prime(1)}$ is identified with $A_{1}^{\prime(2)},$ via the gluing map
of $M$ which identifies $A_{1}^{\prime}$ and $A_{1}^{\prime \prime}.$ This rule
will be always applied when two pairs of faces are identified below.

%%%%%%%%%%%%%%%%%%%%%%%%%%%%%%%%%%%555555
\begin{figure}[ptb]
\begin{center}
\includegraphics[scale=0.7]{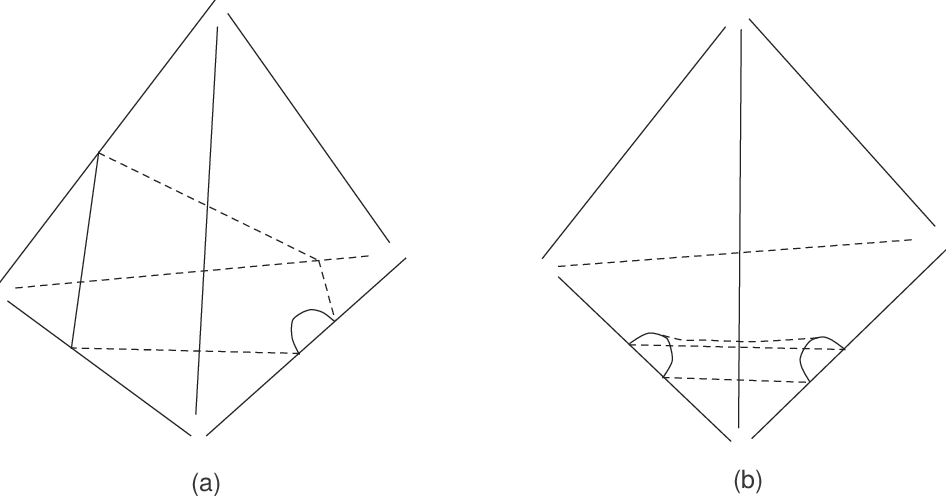}
\end{center}
\caption{Pseudo-triangular and tunnel disc types.}%
\end{figure}
%%%%%%%%%%%%%%%%%%%%%%%%%%%%%%%%%%%%%%%%

Gluing $M_{1}^{(1)}$ and $M_{1}^{(2)}$ as before we obtain a connected
manifold $M_{2}$ with an ideal triangulation $\mathcal{D}_{2}.$ Since
$A_{1}^{\prime}$ and $A_{1}^{\prime \prime}$ belong to different tetrahedra of
$\mathcal{D}_{1}$ we may easily verify that:

(i) $\mathcal{D}_{2}$ has the property of the unique common simplex.

(ii) $M_{2}$ does not have boundary faces corresponding to the faces
$A_{1}^{\prime}$ and $A_{1}^{\prime \prime}$ of $M_{1}.$

On the contrary, $M_{2}$ has boundary faces $A_{2,k}^{\prime},$ $A_{2,k}%
^{\prime \prime},$ $k=1,..,n_{2}=2^{2-1},$ corresponding to the boundary faces
$A_{2}^{\prime}$ and $A_{2}^{\prime \prime}$ of $M_{1}.$ We consider again two
copies $M_{2}^{(1)}$ and $M_{2}^{(2)}$of $M_{2}$ and for $j=1,2,$ let
$A_{2,k}^{\prime(j)},$ $A_{2,k}^{\prime \prime(j)}$ be the faces of
$M_{2}^{(j)}$ corresponding to $A_{2,k}^{\prime},$ $A_{2,k}^{\prime \prime}.$
Our goal is to glue these faces in pairs and obtain a new manifold $M_{3}.$
For this, we glue $M_{2}^{(1)}$ with $M_{2}^{(2)}$ by identifying the pair of
faces $(A_{2,k}^{\prime(1)},A_{2,k}^{\prime \prime(1)})$ with $(A_{2,k}%
^{\prime \prime(2)},A_{2,k}^{\prime(2)}),$ $k=1,2,..,n_{2}.$

Remark that all the faces $A_{2,k}^{\prime}$ and $A_{2,k}^{\prime \prime},$
$k=1,..,n_{2}$ belong to different tetrahedra of $\mathcal{D}_{2}.$ Therefore,
exactly as before we get a connected manifold $M_{3}$ equipped with an ideal
triangulation $\mathcal{D}_{3}$ such that:

(i) $\mathcal{D}_{3}$ has the property of the unique common face.

(ii) $M_{3}$ does not have boundary faces corresponding to the boundary faces
$A_{1}^{\prime},$ $A_{1}^{\prime \prime}$ and $A_{2}^{\prime},$ $A_{2}%
^{\prime \prime}$ of $M_{1}.$

On the contrary, there are boundary faces $A_{3,k}^{\prime}$ and
$A_{3,k}^{\prime \prime},$ $k=1,..,n_{3}=2^{3-1}$ of $M_{3}$ corresponding to
the faces $A_{3}^{\prime}$ and $A_{3}^{\prime \prime}$ of $M_{1}.$

By repeating our procedure, we consider two copies $M_{3}^{(1)}$ with
$M_{3}^{(2)}$ of $M_{3}$ and gluing them appropriately we obtain a manifold
$M_{4}$ equipped with an ideal triangulation $\mathcal{D}_{4}.$ Then, we can
easily verify that $(M_{4},\mathcal{D}_{4})$ verify the analogous properties
(i), (ii) above and so on. Thus, after finitely many steps, we end with a
manifold $N$ without boundary faces. From our construction follows that a
finite covering space $p:N\rightarrow M$ is obtained. Obviously $N$ is
equipped with a topological ideal triangulation $\mathcal{E}$ which has the
property of the unique common simplex.
\end{proof}

Now we will prove two basic existence statements. The first concerns the
existence of normal surfaces and the second the existence of a branched
covering $M^{\prime}$ of $M$ equipped with an ideal triangulation
$\mathcal{D}^{\prime}$ such that: the branch locus consists of edges of
$\mathcal{D}$ and the index of each edge of $\mathcal{D}^{\prime}$ is $\geq6.$

In order to deal with the problem of existence of normal surfaces we may work,
without loss of generality, with the manifold $M_{d}$ instead of $M.$ Part of
the following definition is borrowed from \cite{Jaco-Rubinstein}.

\begin{definition}
\label{linking surface}To each cusp $c_{i}$ of $M_{d}$ corresponds a normal
surface $C_{i}$ consisting of triangular disc types. The surface $C_{i}$ is
contained in a neighborhood $U_{i}$ of $c_{i}$ in $M_{d}$ which is
homeomorphic to $C_{i}\times \lbrack0,\infty).$ The neighborhood $U_{i}$ will
be called trivial neighborhood of $c_{i}.$

Each normal surface in $U_{i},$ modulo normal isotopy, will be parallel to
$C_{i}$ and will be referred to as a \textit{linking surface}, following
\cite{Jaco-Rubinstein}. A singular normal surface lying in $U_{i}$ will be
called a \textit{multiple linking surface}.

A closed curve $a$ in $U_{i}$ which is non-contractible in $U_{i}$ will be
called an essential cuspidal curve. Obviously $a$ is freely homotopic with a
curve $a^{\prime}$ belonging to a linking surface $C_{i}.$
\end{definition}

\begin{lemma}
\label{horospheres} If $M_{d}$ is a \textit{regular, negatively curved cusped
manifold then }each linking surface $C$ in a trivial neighborhood $U$ of a
cusp $c$ of $M_{d}$ is incompressible.
\end{lemma}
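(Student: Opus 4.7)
The plan is to identify the trivial cusp neighborhood $U$ with the quotient of a closed horoball in the universal cover $\widetilde{M_{d}}$ by its parabolic stabilizer, and then to exploit the convexity of horoballs in the $CAT(-1)$ setting to conclude that this horoball is simply connected.

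Let $\Gamma=\pi_{1}(M_{d})$ act on $\widetilde{M_{d}}$ by deck transformations. The cusp $c$ corresponds to a $\Gamma$-orbit of parabolic fixed points $\xi\in\partial\widetilde{M_{d}}$; I would fix a representative $\xi$ and denote its stabilizer by $\Gamma_{\xi}<\Gamma$. Since $\widetilde{M_{d}}$ is globally $CAT(-1)$, each Busemann function $\beta_{\xi}$ is convex on $\widetilde{M_{d}}$, so every closed horoball $\overline{B}_{\xi}=\beta_{\xi}^{-1}((-\infty,0])$ at $\xi$ is a closed convex subset of $\widetilde{M_{d}}$. After possibly shrinking $U$ to a smaller trivial neighborhood still containing a surface isotopic to $C$, the family $\{g\overline{B}_{\xi}:g\Gamma_{\xi}\in\Gamma/\Gamma_{\xi}\}$ is pairwise disjoint and equals the full preimage of $U$ under the covering projection, so $\overline{B}_{\xi}\to U$ becomes a regular covering with deck group $\Gamma_{\xi}$.

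Next, as a closed convex subspace of a $CAT(-1)$ space, $\overline{B}_{\xi}$ is itself $CAT(-1)$, uniquely geodesic, and contractible via geodesic contraction to any chosen basepoint; in particular it is simply connected. Hence the covering $\overline{B}_{\xi}\to U$ is the universal cover, and $\pi_{1}(U)\cong\Gamma_{\xi}$ is realized as the subgroup $\Gamma_{\xi}\subset\Gamma=\pi_{1}(M_{d})$ via the inclusion-induced map. Composing with the homotopy equivalence $C\hookrightarrow U$ provided by the product structure $U\cong C\times[0,\infty)$ yields an injection $\pi_{1}(C)\hookrightarrow\pi_{1}(M_{d})$, which by the characterization of incompressibility stated earlier in the paper shows that $C$ is incompressible.

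The main obstacle I anticipate is rigorously matching the topological trivial neighborhood of Definition \ref{linking surface} with a geometric horoball quotient. Concretely, one must verify that for small enough $U$ its preimage in $\widetilde{M_{d}}$ is exactly a disjoint union of closed convex horoballs, one for each parabolic fixed point above $c$; this uses that each ideal tetrahedron incident to $c$ contributes a horospherical truncation that assembles into $\overline{B}_{\xi}$ in a $\Gamma_{\xi}$-equivariant way. Once this geometric identification is in place, all remaining steps are standard $CAT(-1)$ formalism.
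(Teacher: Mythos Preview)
Your argument is correct and reaches the same conclusion, but the route differs from the paper's. The paper lifts the linking surface $C$ directly to a horosphere $H$ in $\widetilde{M_{d}}$ and shows $H\cong\mathbb{R}^{2}$ by exhibiting a bijection between $H$ and $\partial\widetilde{M_{d}}\setminus\{\widetilde{c}\}\cong S^{2}\setminus\{\text{pt}\}$: every geodesic from $\widetilde{c}$ to another boundary point meets $H$ in a single point. Since $H$ is then a simply connected lift of $C$, the injection $\pi_{1}(C)\hookrightarrow\pi_{1}(M_{d})$ follows. You instead work one dimension up, with the horoball $\overline{B}_{\xi}$ rather than the horosphere, and invoke convexity of Busemann functions in $CAT(-1)$ spaces to get contractibility, then pass through $\pi_{1}(U)\cong\Gamma_{\xi}\hookrightarrow\Gamma$. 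Your approach is more in the standard $CAT(-1)$ idiom and does not rely on the earlier identification $\partial\widetilde{M_{d}}\cong S^{2}$; the paper's argument is more elementary and concrete but uses that specific topological fact about the visual boundary. Both handle the same technical point---that the lift of the cusp neighborhood is simply connected---and the obstacle you flag (matching the combinatorial trivial neighborhood with a geometric horoball quotient) is present in the paper's proof as well, where it is asserted rather than argued in detail.
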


\begin{proof}
The cusp $c$ is an ideal vertex of some hyperbolic ideal tetrahedron $D$ of
the triangulation $\mathcal{D}$ of $M_{d}.$ Obviously there is a point
$\widetilde{c}$ in the boundary $\partial \widetilde{M_{d}}$ of $\widetilde
{M_{d}}$ which is an ideal vertex of some hyperbolic ideal tetrahedron
$\widetilde{D}\subset \widetilde{M_{d}},$ where $\widetilde{D}$ is a lifting of
$D\subset M_{d}.$ Since $\partial \widetilde{M_{d}}$ is homeomorphic to $S^{2}$
it is not hard to prove the the horosphere $H$ corresponding to $\widetilde
{c}$ is homeomorphic to $\mathbb{R}^{2}.$ In fact, the geodesic which joins
$\widetilde{c}$ with a point of $\partial \widetilde{M_{d}}-\{ \widetilde{c}\}$
intersects $H$ in a single point. Thus a bijection between $\partial
\widetilde{M_{d}}-\{ \widetilde{c}\}$ and $H$ is established which proves that
$H$ is homeomorphic to $\mathbb{R}^{2}.$ Finally, $H\subset \widetilde{M_{d}}$
is a lifting of $C$ and this implies that $C$ is incompressible in $M_{d}.$
\end{proof}

Now let $D_{i}$ be a tetrahedron of $\mathcal{D}$ and let two disjoint
quadrilateral disc types $E_{1},$ $E_{2}$ in $D_{i}.$ The disc types $E_{1},$
$E_{2}$ cross over each other transversely as in Figure 2(a), and we may
perform a surgery to construct disjoint triangular disc types within $D_{i}.$
Indeed, by cutting $E_{1}$ and $E_{2}$ along the line of intersection and
reassembling the pieces properly we take within $D_{i}$ the properly embedded
discs $E_{1}^{\prime},$ $E_{2}^{\prime}$ of Figure 2(b) or the properly
embedded discs $E_{1}^{\prime \prime},$ $E_{2}^{\prime \prime}$ of Figure 2(c).
After performing an isotopy, via properly embedded discs in $D_{i},$ the discs
$E_{1}^{\prime},$ $E_{2}^{\prime},$ $E_{1}^{\prime \prime},$ $E_{2}%
^{\prime \prime}$ are either triangular disc types or they have the form of
Figure 3(a). This latter disc will be abusively called a
\textit{pseudo-triangular disc type} of $D_{i}.$ The previous procedure, which
leads the intersected disc types $E_{1},$ $E_{2}$ to disjoint discs
$E_{1}^{\prime},$ $E_{2}^{\prime}$ (or $E_{1}^{\prime \prime},$ $E_{2}%
^{\prime \prime})$ will be referred to as \textit{surgery of quadrilateral disc
types within} $D_{i}.$

We assume now that the discs types $E_{1},$ $E_{2}$ are intersected
transversely in $D_{i}$ but that only one of them is triangular. Then, as
before, by cutting $E_{1},$ $E_{2}$ along the line of intersection and
reassembling the pieces properly we take within $D_{i}$ disc types either of
triangular or of quadrilateral form or, we take a disc having the form of
Figure 3(b) and which will be called disc of \textit{tunnel type} or abusively
a \textit{tunnel disc type}. The tunnel disc type is parallel to a disc
contained in $\partial D_{i}.$ This procedure, which leads the intersected
disc types $E_{1},$ $E_{2}$ to disjoint triangular disc types, quadrilateral
disc types or tunnel types will be referred to as \textit{surgery of
triangular disc types within} $D_{i}.$

Finally, we consider triangular discs types $E_{1},$ $E_{2}$ in $D_{i}.$ If
$E_{1},$ $E_{2}$ are intersected transversely, choosing one of them arbitrary,
we move it sufficiently close to an ideal vertex of $D_{i},$ via an isotopy by
parallel disc types. In this way we obtain disjoint triangular disc types.
This procedure, which leads the intersected triangular disc types $E_{1},$
$E_{2}$ to disjoint triangular disc types will be referred to as
\textit{trivial isotopy of disc types within} $D_{i}.$

\begin{definition}
A generalized normal surface in $M$ is an embedded surface $S$ in $M_{d}$ such
that for each hyperbolic ideal tetrahedron $D$ of $\mathcal{D}$ the
intersection $S\cap D$ can be, apart from the seven disc types defined in
Paragraph 2, a \textit{pseudo-triangular disc type or tunnel disc type.}

Let $S$ be generalized normal surface. We will say that $S$ contains a cyclic
tunnel if there is a subsurface $T$ of $S$ homeomorphic to $S^{1}\times
\lbrack0,1]$ such that $T$ is a finite union of tunnel disc types
$T_{1},..,T_{k}$ with $Int(T_{i})\cap Int(T_{j})=\emptyset$ for $i\neq j.$
\end{definition}

Now, let $D_{i},$ $i=1,...n$ be all the ideal tetrahedra of $\mathcal{D}$. As
we have seen, in each tetrahedron $D_{i}$ there are seven disc types. The
number of all considered disc types is $7n;$ we denote them by $E_{i}$ and to
each one we correspond a variable $x_{i}$, $i=1,..,a$ with $a=$ $7n.$

There are also $4n/2=2n$ 2-simpices $\sigma$ in $\mathcal{D}^{(2)}$ since by
hypothesis we have $n$ tetrahedra and each face belongs exactly to two
tetrahedra. In a 2-simplex $\sigma$ of $\mathcal{D}^{(2)}$ there are three
possible classes of arcs which belong to the boundary of disc types. Let
$\{l_{1},..,l_{m}\},$ $m=6n$ be these classes of arcs. Each $l_{j}$ has two
sides, looking into the one or into the other of the tetrahedra of $M_{d}$
which share the face $\sigma$ containing $l_{j}.$ In a purely abstract way, we
can go through and label the sides with the words \textquotedblleft
left\textquotedblright \ and \textquotedblleft right\textquotedblright.

For all numbers $i$ between $1$ and $a,$ and $j$ between $1$ and $m,$ we may
define the numbers $b_{ij}$ as follows:%

\[
b_{ij}=\left \{
\begin{array}
[c]{c}%
0\text{ if }l_{j}\text{ is not in }E_{i}\\
1\text{ if }E_{i}\text{ is on the left side of }l_{j}\\
-1\text{ if }E_{i}\text{ is on the right side of }l_{j}%
\end{array}
\right \}  .
\]

With this definition, the adjacency restriction can be formulated as a system
of linear equations:%

\begin{equation}%
%TCIMACRO{\dsum \limits_{j=1}^{a}}%
%BeginExpansion
{\displaystyle \sum \limits_{j=1}^{a}}
%EndExpansion
b_{ij}x_{i}=0\Leftrightarrow BX=0,\text{ where }B=[b_{ij}]. \tag{1}\label{1}%
\end{equation}

The problem is to prove that the linear system (\ref{1}) has positive
solutions, since to each positive solution of integer numbers corresponds a
singular normal surface $f:S\rightarrow M_{d}$ and furthermore that
$f:S\rightarrow M_{d}$ is not a multiple linking surface. Therefore the whole
problem is reduced to a classical problem of linear algebra which seeks for
conditions which guarantee the existence of positive solutions of a linear
system, see for example \cite{Hemion} and \cite{Dines}. Generally, this is not
an easy problem to deal with. For this reason we give below a geometric
construction of normal surfaces for triangulated ideal 3-manifolds.\medskip

Let $e$ be an arbitrary edge of $\mathcal{D}$ and let $Star(e)$ be the subset
of $M_{d}$ consisting of all tetrahedra of $\mathcal{D}$ having $e$ as a
common edge. Since $M_{d}$ has the property of the unique common simplex
$Star(e)$ is a particular simple set cosisting of $n$ hyperbolic ideal
tetrahedra $D_{1},..,D_{n}$ such that $e\subset D_{i}$ for each $i$ and
$D_{i}\cap D_{i+1(\operatorname{mod}n)}$ is a face of $\mathcal{D}.$ The form
of $Star(e)$ plays a key role in what follows.

Now, we are able to prove the following.

\begin{theorem}
\label{existence normal surface}We assume that the ideal triangulation
$\mathcal{D}$ of $M$ has the property of the unique common simplex. Then,

(a) There exists a singular normal surface $f:S\rightarrow M_{d}$ such that
$S$ is orientable and $f$ is an immersion.

(b) There exists an orientable normal surface $S_{0}$ in $M_{d}.$
\end{theorem}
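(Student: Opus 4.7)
The plan is to build the surfaces geometrically using the triangulated ball $B$ provided by Lemma \ref{special triangulation}, and then to resolve the effect of the face gluings using the surgery operations on disc types defined above.

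Part (b) can be settled by appealing directly to the linking surfaces of Definition \ref{linking surface}: to each cusp $c_i$ of $M_d$ there is already associated an embedded normal surface $C_i$ built entirely from triangular disc types. Taking $S_0 = C_i$ yields an embedded normal surface in $M_d$, so no further work is required beyond invoking the definition.

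For part (a), I would construct a singular normal surface which is a genuinely closed orientable immersed surface rather than just a (multiple) linking surface. By Lemma \ref{special triangulation}, $M$ is obtained from $B$ by identifying each boundary face $F$ of $B$ with another boundary face $F'$ via a gluing map of $M$. Inside $B$, I would choose in each tetrahedron $D_i$ a coherent collection of disc types, combining triangular disc types near the ideal vertices of $D_i$ with quadrilateral disc types in a selected subset of the tetrahedra, so that the resulting embedded surface in $B$ has trace arcs on the boundary faces that can be paired up compatibly under the identifications $F \leftrightarrow F'$. Whenever two traces fail to match at a pair $(F, F')$, the discrepancy would be corrected by inserting additional disc types in the adjacent tetrahedra of $B$ and applying quadrilateral surgery, triangular surgery, or trivial isotopy of disc types to restore disjointness of disc types within each tetrahedron. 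These surgeries may introduce pseudo-triangular or tunnel disc types, producing a generalized normal surface whose image in $M_d$ is a singular normal surface $f : S \to M_d$. Orientability is preserved by fixing a transverse coorientation of $\Sigma$ at the outset and propagating it consistently through the surgeries; and $f$ will be an immersion precisely because all disc-type intersections have been resolved tetrahedron by tetrahedron.

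The main obstacle is ensuring that this correction process terminates and closes up to a closed surface. The hypothesis that $\mathcal{D}$ has the property of the unique common simplex is essential here: it guarantees that each face of $\mathcal{D}$ is shared by exactly two distinct tetrahedra, so corrections remain local and combinatorially tractable, with no pathological self-identifications that would force an infinite cascade. The ball structure of $B$, in particular its simple connectedness, then allows any propagation cycle of corrections to be filled inside $B$, so that the process terminates after finitely many steps and the final surface closes up across all the identifications $F \leftrightarrow F'$.
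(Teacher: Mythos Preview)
Your handling of part (b) is formally correct for the statement as literally written, but it sidesteps the entire content of the theorem. The paper's proof of (b) constructs a normal surface that is \emph{not} a linking surface, and this is precisely what the main theorem needs (see the proof of Theorem \ref{main}). Invoking a linking surface $C_i$ gives nothing useful downstream. More importantly, the paper derives (b) from (a): it takes the singular surface $S_1=f(S)$ and performs quadrilateral and triangular surgeries together with trivial isotopies within each tetrahedron to obtain an embedded surface, then shows one component $K_{i_0}$ carries a quadrilateral disc type and can be arranged (Claim 3) to avoid cyclic tunnels, hence is a genuine non-linking normal surface. You have inverted the logical order: you trivialize (b) and then try to import the surgery machinery into (a), where it does not belong.

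Your plan for (a) has two concrete problems. First, the surgeries you invoke produce pseudo-triangular and tunnel disc types; these are \emph{not} among the seven disc types, so the result is only a generalized normal surface, not a singular normal surface in the sense of the definition. Being an immersion is not the issue here---a singular normal surface is allowed to self-intersect, and the paper argues $f$ is an immersion simply because all self-intersections are generic double or triple points away from $\mathcal{D}^{(1)}$. Second, the paper's actual construction for (a) is much more specific than ``choose a coherent collection of disc types and correct mismatches'': it starts with a single quadrilateral $E_1\subset D_1$, extends it uniquely by triangular discs to a disc $C_1\subset B$ with $c_1=\partial C_1\subset\partial B$, and then (Claim 2) rebuilds $B$ into a new ball $B_0$ so that the image curve $c_1'$ under the gluing maps is connected and bounds a second disc $C_1'$. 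The union $C_1\cup C_1'$ is the singular normal surface. The unique-common-simplex hypothesis is used in a very concrete way here---to rule out specific intersections of $c_1$ and $c_1'$ with the cutting face $R$ during the reconstruction of $B_0$---not as a general finiteness principle for a correction cascade. Your outline does not supply any mechanism that would force the traces on $F$ and $F'$ to match, and the vague appeal to simple connectedness of $B$ does not do this job.
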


\begin{proof}
From Lemma \ref{special triangulation} and using $M_{d}$ instead of $M,$ we
may find a manifold $B$ equipped with a hyperbolic ideal triangulation
$\mathcal{B}$ consisting of the hyperbolic ideal tetrahedra $D_{i}$ of
$\mathcal{D}$ and such that the properties (1)-(4) of Lemma
\ref{special triangulation} are satisfied. The only difference here, with
respect to Lemma \ref{special triangulation}, is that the gluing maps of
$M_{d}$ are isometries.

Let $E_{1}$ be a square disc type in a tetrahedron, say $D_{1}$ of $B.$ The
disc $E_{1}$ can be extended to a disc $C_{1}$ in $B$ with $\partial
C_{1}\subset \partial B.$ The extension of $E_{1}$ is done by adding
successively triangular disc types. Notice here that this extension of $E_{1}$
to the disc $C_{1}$ (by gluing only triangular disc types) is unique and hence
$C_{1}$ will be referred to as the extension of $E_{1}.$ If $c_{1}=\partial
C_{1}$ we remark that $c_{1}$ determines uniquely, up to isotopy, the disc
$C_{1}$ in $B.$ Also, $c_{1}$ intersects each boundary face at most in a
single arc. The latter follows easily since $c_{1}$ separates $\partial B.$

Similarly, starting from the other two quadrilateral disc types $E_{2},$
$E_{3}$ in $D_{1}$ we denote their extensions in $B$ by $C_{2},$ $C_{3}$
respectively and let $c_{2}=\partial C_{2},$ $c_{3}=\partial C_{3}.$

The curve $c_{1}$ consists of simple arcs, say $a_{1},..,a_{n},$ i.e.
$c_{1}=a_{1}\cup...\cup$ $a_{n}$ with $a_{i}\subset F_{i},$ where $F_{i}$ is a
boundary face of $B.$ Let $F_{i}^{\prime}$ be the face of $B$ which is glued
to $F_{i}$ to form $M_{d}.$ Assuming that $f_{i}$ is the gluing map between
$F_{i}$ and $F_{i}^{\prime}$ let $c_{1}^{\prime}=a_{1}^{\prime}\cup...\cup$
$a_{n}^{\prime}$ where $a_{i}^{\prime}=f(a_{i}).$ Generally $F_{1}^{\prime
}\cup...\cup$ $F_{n}^{\prime}$ is not connected and hence $c_{1}^{\prime}$ is
not connected too. Thus, we set $c_{1}^{\prime}=b_{1}^{\prime}\cup...\cup$
$b_{k}^{\prime}$ where each $b_{i}$ is a connected arc.

From the form of the set $Star(e)\subset M_{d},$ where $e\in \mathcal{D}%
^{(1)},$ we may find simple arcs $x_{i},$ $y_{i}$ arcs in $\partial B,$
$i=1,..,k-1$ such that:

(1) the curve $d_{1}^{\prime}=b_{1}^{\prime}\cup x_{1}\cup y_{1}\cup
b_{2}^{\prime}\cup...b_{k-1}^{\prime}\cup x_{k-1}\cup y_{-1}\cup$
$b_{k}^{\prime}$ is connected i.e. $d_{i}^{\prime}$ is a simple closed curve
in $\partial B;$

(2) the arcs $x_{i}$ and $y_{i}$ are identified in $M_{d},$ that is, the curve
$c_{1}^{\prime}=b_{1}^{\prime}\cup...\cup$ $b_{k}^{\prime}$ in $M_{d}$ is connected.

Now let $C_{1}^{\prime \prime}$ be the disc in $B$ bounded by $d_{1}^{\prime}.$
From the previous property (2) there exists a disc $C_{1}^{\prime}$ in $M_{d}$
with $\partial C_{1}^{\prime}=c_{1}^{\prime}.$ In fact, identyfing for each
$i$ the arcs $x_{i}$ and $y_{i}$ in $M_{d}$ the disc $C_{1}^{\prime}$ results
from $C_{1}^{\prime \prime}.$

Now $S_{1}=C_{1}\cup_{c_{1}\equiv c_{1}^{\prime}}C_{1}^{\prime}$ is the image
of a singular normal surface in $M_{d}.$

Below we need $C_{1}^{\prime}$ to be different from $C_{2}$ and $C_{3}.$
Actually, $C_{1}^{\prime}$ can coincide with $C_{2}$ or $C_{3}$ only if
$c_{1}^{\prime}$ is connected and coincides with $c_{2}$ or $c_{3}.$ In this
special case, if for example $c_{1}^{\prime}=c_{2},$ we work with $c_{3}$ in
the place of $c_{1}.$ Then the image $c_{3}^{\prime}$ of the curve $c_{3}$
obtained in the place of $c_{1}^{\prime}$ cannot coincide neither with $c_{1}$
nor $c_{2}.$ Therefore, we may always assume that $C_{1}^{\prime}\neq C_{2}$
and $C_{1}^{\prime}\neq C_{3}.$

Furthermore, $S_{1}$ is not a multiple linking surface since it contains the
disc type $E_{1}.$ Thus, we may construct a singular normal surface
$f:S\rightarrow M_{d},$ where $S$ is a closed surface with $f(S)=S_{1}.$

We will show that $S$ is orientable and that $f$ is an immersion. Indeed, from
the triangulation $\mathcal{D}$ and by means of $f,$ a cell decomposition
$\mathcal{T}$ is induced on $S.$ Each cell $T\in \mathcal{T}$ is either a
triangle or a quadrilateral in the sense that, a triangle (resp.
quadrilateral) $T\in \mathcal{T}$ is mapped by $f$ to a triangular (resp.
quadrilateral) disc type in some tetrahedron of $\mathcal{D}.$ As $f$ is $1-1$
on each cell $T\in \mathcal{T}$ we may assume that the images $f(T),$ intersect
each other in $M_{d}-$ $\mathcal{D}^{(1)}.$ From our construction $S_{1}$
induces at most two disc types in each tetrahedron of $\mathcal{D}.$
Therefore, all intersection points of $f(T),$ $T\in \mathcal{T}$ are
generically double points and since $f$ is assumed to be smooth, we deduce
that $f$ is an immersion. Finally, since $M_{d}$ is orientable all tetrahedra
$D_{i}$ can be oriented compatibly and thus all $T\in \mathcal{T}$ inherit
orientations from $D_{i}$ which are also compatible. Therefore $S$ is an
orientable surface.

Now we will derive the existence of a normal surface $S_{0}$ in $M_{d}$ from
the existence of $f:S\rightarrow M_{d}.$ For this, we perform within each
tetrahedron isotopies and surgeries following the next rules:

(I) \textit{By trivial isotopies of disc types in the appropriate ideal
tetrahedra, we may assume that any pair of triangular disc types }$E,$\textit{
}$E^{\prime}\subset$\textit{ }$f(S)$\textit{ which are contained in the same
}$D_{i}$\textit{ they do not intersect between them. }

Assumption (I) implies that a triangular disc type and a quadrilateral one
belonging to $f(S),$ may intersect in some tetrahedra. Recall also that, by
the construction of $f(S),$ on each tetrahedron of $\mathcal{D}$ they are
induced at most two disc types. These disc types are possibly of quadrilateral
type only within $D_{1}.$

(II) \textit{We fix a tetrahedron, say }$D_{i_{0}}$\textit{ in which a
triangular disc type }$P$\textit{ and a quadrilateral disc type }$Q$\textit{
appear. In }$D_{i_{0}}$\textit{ we perform a surgery of triangular disc type
between }$P$\textit{ and }$Q$\textit{ so that we obtain two disjoint disc
types of quadrilateral and triangular type, denoted by }$P_{i_{0}}$\textit{
and }$Q_{i_{0}}$\textit{ respectively. The surgeries performed to all the
others tetrahedra }$D_{i}$\textit{ are performed in a way compatible with the
surgery in }$D_{i_{0}}.$\textit{ This means that we take finitely many
components }$K_{1},$\textit{ }$K_{2},..,K_{n}$\textit{ which are orientable,
connected, closed surfaces and which are in general, generalized normal
surfaces.}

We remark here that the tetrahedron $D_{i_{0}}$ exists because of our
construction of $S_{1}.$ More precisely, $C_{1}$ has only one quadrilateral
disc type in $D_{1}$ and since we have assumed that $C_{1}^{\prime}$ is
different from $C_{2}$ and $C_{3}$ our remark follows.

We remark also that at most one pseudo-triangular disc type can appear after
performing the previous isotopy and surgery operations. Indeed, this follows
from the fact that only in $D_{1}$ can exist two quadrilateral disc types
whose surgery gives a pseudo-triangular disc type.

Now we consider the component $K_{i_{0}}$ which contains $P_{i_{0}}$ and we
distinguish two cases. In the first case $K_{i_{0}}$ is a normal surface and
there is nothing to do. In the second case, $K_{i_{0}}$ must have necessarily
a cyclic tunnel. If we denote by $i(\mathcal{D}^{(1)},K_{i_{0}})$ the number
of points that $K_{i_{0}}$ intersects the 1-skeleton $\mathcal{D}^{(1)},$ then
it is possible, by performing an isotopy in order to minimize $i(\mathcal{D}%
^{(1)},K_{i_{0}}),$ to remove the quadrilateral disc type $P_{i_{0}}$ and thus
$K_{i_{0}}$ can be reduced to a linking surface. Remark also that in this
second case $K_{i_{0}}$ cannot contain a pseudo-triangular disc type.

To deal with the second case, we consider a tetrahedron $D_{j_{1}}$ which
contains a tunnel disc type $T$ with $T\subset K_{i_{0}}.$ Let $t$ (resp.
$t^{\prime})$ be the arc in $\partial T$ whose end-points belong to the same
edge of $D_{j_{1}}$ and let us call \textit{tunnel face} of $D_{j_{1}}$ the
face $R_{j_{1}}$ (resp. $R_{j_{1}}^{\prime})$ of $D_{j_{1}}$ which contains
$t$ (resp. $t^{\prime}).$ Now, within $D_{j_{1}}$ we perform a surgery of
triangular type which does not produce a tunnel disc type; this surgery will
be referred to below as \textit{non-trivial surgery } in $D_{j_{1}}.$ Here we
need the following claim.

\textit{Claim 2}: The non-trivial surgery in $D_{j_{1}}$ can be extended to a
compatible surgery in all the tetrahedra $D_{j_{1}},...,D_{j_{k}}$ of
$\mathcal{D}$ whose union contains $K_{i_{0}},$ and also this non-trivial
surgery is compatible with the initial surgery in $D_{i_{0}}$, that is, it
respects the existence of $P_{i_{0}}.$

\textit{Proof of Claim 2}. For each $i=1,..k,$ the tunnel faces $R_{j_{i}},$
$R_{j_{i}}^{\prime}$ of $D_{j_{i}}$ are only glued with tunnel faces of
tetrahedra adjacent to $D_{j_{i}}$ along these faces. Hence, we perform
non-trivial surgeries in the tetrahedra $D_{j_{1}},...,D_{j_{k}}$ in order to
avoid tunnel components in them. The new disc types in each $D_{j_{i}}$ induce
in its faces which are different from $R_{j_{i}}$ and $R_{j_{i}}^{\prime}$
exactly the same arcs, as the surgery in $D_{j_{i}}$ which produces tunnel
disc type. Therefore the non-trivial surgery in each $D_{j_{i}}$ does not
affect the disc types in $D_{i_{0}}$ and thus it is compatible with the
initial surgery in $D_{i_{0}}.$

From Claim 2 we may assume that $K_{i_{0}}$ does not contain a cyclic tunnel
and we set $S_{0}=K_{i_{0}}.$ This surface $S_{0}$ is not a linking surface
since it contains a quadrilateral disc type. Finally, considering if necessary
a finite covering of $M_{d}$ we may assume that $S_{0}$ is orientable.
\end{proof}

The following proposition is rather technical but not difficult. Nevertheless,
we give a proof for the reader's convenience.

\begin{proposition}
\label{finite branched covering} There is a branched covering $M^{\prime}$ of
$M$ with branch locus the edges of $\mathcal{D}$ and a topological ideal
triangulation $\mathcal{D}^{\prime}$ of $M^{\prime}$ such that $i_{\mathcal{D}%
^{\prime}}(e^{\prime})\geq6$ for each edge $e^{\prime}$ of $\mathcal{D}%
^{\prime}.$
\end{proposition}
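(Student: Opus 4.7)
The plan is to construct $M'$ as a cyclic branched cover of $M$ whose branch locus lies along edges of $\mathcal{D}$. Since $M$ has its ideal vertices removed, the $1$-skeleton $\mathcal{D}^{(1)}$ is a disjoint union of embedded arcs and loops in $M$, so it is a proper $1$-submanifold and is a legitimate branch locus. The key local observation is the following: if $\pi:M' \to M$ is a branched cover of degree $n$ with ramification index $n$ along an edge $e$, then $\mathcal{D}$ lifts to an ideal triangulation $\mathcal{D}'$ of $M'$ in which the edge $e'$ above $e$ satisfies $i_{\mathcal{D}'}(e') = n\cdot i_{\mathcal{D}}(e)$, while edges lying above $e$ with smaller local ramification have their index scaled by the corresponding local degree. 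Since $i_{\mathcal{D}}(e)\geq 2$ by definition, a uniform ramification of order $n=3$ taken along every edge of $\mathcal{D}$ already guarantees $i_{\mathcal{D}'}(e')\geq 6$ for every edge $e'$ of $\mathcal{D}'$.

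\textbf{Construction of the cover.} To produce such a branched cover I would mimic the cut-and-paste strategy of Proposition \ref{finite covering}. Starting from the polyhedron $B$ provided by Lemma \ref{special triangulation}, take $n$ disjoint copies $B^{(1)},\dots,B^{(n)}$. For each pair of boundary faces $(A',A'')$ of $B$ whose identification in $M$ is prescribed by a gluing map $f$, replace the identification $A'\equiv_f A''$ by the cyclic identifications $A'^{(j)} \equiv_f A''^{(j+1\bmod n)}$ for $j=1,\dots,n$. The resulting quotient carries an obvious projection to $M$ sending each $B^{(j)}$ onto $B$. A local inspection at an edge $e$ of $\mathcal{D}$ shows that the arcs of the link of $e$ appearing in the various copies $B^{(j)}$ concatenate cyclically into a single circle of length $n\cdot i_{\mathcal{D}}(e)$, so that the lifted edge in $M'$ has index $n\cdot i_{\mathcal{D}}(e)\geq 6$, as required. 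Because the $n$ sheets permute cyclically around each edge and freely away from $\mathcal{D}^{(1)}$, the natural projection $M'\to M$ is a branched covering in the required sense.

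\textbf{Main obstacle.} The main technical point is verifying that the cyclic cut-and-paste procedure produces a genuine $3$-manifold and that the branch locus is exactly $\mathcal{D}^{(1)}$, with no extraneous singular stratum appearing. The unique-common-simplex property (4) of the triangulation $\mathcal{B}$ in Lemma \ref{special triangulation} is what makes this work cleanly: it forces the local picture around every edge of $\mathcal{B}$ to be a book of tetrahedra glued along a single arc, so that the cyclic identification scheme on boundary faces produces precisely an $n$-fold cyclic cover around that arc rather than some more complicated local model. A secondary subtlety is that a single edge of $\mathcal{D}$ may arise as the concatenation of several edges of $\mathcal{B}$ that become identified when boundary faces of $B$ are glued, so one must check that the cyclic gluing pattern on boundary faces induces the correct cyclic covering locally around each edge of $\mathcal{D}$. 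As in the proof of Proposition \ref{finite covering}, this is handled by performing the boundary identifications incrementally, doubling the number of copies when needed and verifying at each stage that both the unique-common-simplex property and the prescribed $n$-fold cyclic ramification pattern are preserved.
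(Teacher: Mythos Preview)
Your overall strategy coincides with the paper's: take several copies of the polyhedron $B$ from Lemma~\ref{special triangulation} and reglue their boundary faces so as to produce a cover branched along $\mathcal{D}^{(1)}$. The gap lies in the specific gluing rule and in the claim you draw from it. You shift \emph{every} boundary identification by one copy, $A'^{(j)}\equiv_f A''^{(j+1\bmod n)}$, and then assert that the link of each edge $e$ of $\mathcal{D}$ lifts to a single circle of length $n\cdot i_{\mathcal{D}}(e)$. But the monodromy of this cover around $e$ is the signed sum, modulo $n$, of the shifts at the $m_e$ boundary-face crossings met in one traversal of the link of $e$; a crossing contributes $+1$ or $-1$ according to whether one exits through the face you labelled $A'$ or the one labelled $A''$. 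Since each boundary face of $B$ has three sides and hence participates in three distinct edge-cycles, these labels cannot be arranged so that every crossing contributes $+1$ for every edge simultaneously. The monodromy around $e$ is thus some residue $r_e$ with $|r_e|\le m_e$, and nothing prevents $r_e\equiv 0$ (this can occur whenever $m_e$ is even). When $r_e$ is not a unit the preimage of $e$ splits, and the lifted edges have index $i_{\mathcal{D}}(e)\cdot n/\gcd(r_e,n)$, which may still be below $6$. The ``secondary subtlety'' you flag is exactly this issue, but the remedy you borrow from Proposition~\ref{finite covering} (incremental doubling) is tailored to enforce the unique-common-simplex property, not to control monodromy.

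The paper's gluing is selective rather than uniform. For a fixed edge $e$ it lists the $m_e$ boundary pairs met in the cycle around $e$ and, with $k$ copies of $B$, glues $F_{j,i'}^{e}$ to $F_{j,i+1}^{e}$ \emph{within the same copy} for $i=1,\dots,m_e-1$, crossing to the next copy only once, at $F_{j,m_e'}^{e}\to F_{(j+1)\bmod k,\,1}^{e}$. This forces the monodromy around $e$ to be exactly $+1$ regardless of $m_e$, so the preimage of $e$ is a single edge of index $k\cdot i_{\mathcal{D}}(e)$; to treat all edges at once the paper then passes to $k_0=k\cdot\prod_i m_{e_i}$ copies. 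If you want to salvage your uniform scheme you would need, as a separate step, to exhibit a $\mathbb{Z}/n$-valued cochain on the boundary-face pairings whose evaluation on every edge-link is a unit modulo $n$; your sketch does not do this.
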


\begin{proof}
Let $e_{i},$ $i=1,..,n$ be the edges of $\mathcal{D}$. A \textit{cycle of
faces around }$e_{i}$ in $M$ consists of faces $F_{1}^{i},..,F_{k_{i}}^{i}$ of
$\mathcal{D}$ such that:

(i) For each $i,$ $e_{i}$ is a common edge of all $F_{1}^{i},..,F_{k_{i}}%
^{i}.$

(ii) For each $i,$ $F_{j}^{i}$ and $F_{j+1(\operatorname{mod}k_{i})}^{i}$
belong to the same ideal tetrahedron of $\mathcal{D}$.

We consider the triangulated ideal polyhedron $B$ of Lemma
\ref{special triangulation}.

Now, for every $e\in \mathcal{D}^{(1)}$ there exist pairs of boundary faces
$(F_{1}^{e},F_{1^{\prime}}^{e}),$ $(F_{2}^{e},F_{2^{\prime}}^{e}),...,$
$(F_{m_{e}}^{e},F_{m_{e}^{\prime}}^{e})$ of $B$ such that:

(1) $F_{i}^{e}$ and $F_{i^{\prime}}^{e}$ have a common edge, say $e_{i}^{B}%
\in \mathcal{B}^{(1)},$ $i=1,..,m_{e}$

(2) If we glue $F_{1^{\prime}}^{e}$ with $F_{2}^{e},$ $F_{2^{\prime}}^{e}$
with $F_{3}^{e},$ $...,$ $F_{m_{e}^{\prime}}^{e}$ with $F_{1}^{e},$ via gluing
homeomorphisms of $M,$ we obtain a manifold $M_{e}$ with boundary such that
all the edges $e_{i}^{B}$ are matched together to a common edge, say
$\overline{e},$ and the cycles of faces around $\overline{e}$ in $M_{e}$ and
around $e$ in $M$ are the same.

The positive integer $m_{e}$ will be referred to as the \textit{weight of }%
$e$\textit{ on the boundary of }$B.$

Considering $k$ copies of $B,$ say $B_{1},...,B_{k},$ we denote by
$(F_{j,1}^{e},F_{j,1^{\prime}}^{e}),$ $(F_{j,2}^{e},F_{j,2^{\prime}}%
^{e}),...,$ $(F_{j,m_{e}}^{e},F_{j,m_{e}^{\prime}}^{e})$ the boundary faces of
$B_{j}$ corresponding to the boundary faces $(F_{1}^{e},F_{1^{\prime}}^{e}),$
$(F_{2}^{e},F_{2^{\prime}}^{e}),...,$ $(F_{m_{e}}^{e},F_{m_{e}^{\prime}}^{e})$
of $B.$ Then, for each $j=1,..,k,$ we glue $F_{j,1^{\prime}}^{e}$ with
$F_{j,2}^{e},$ $F_{j,2^{\prime}}^{e}$ with $F_{j,3}^{e},$ $...,$
$F_{j,r^{\prime}}^{e}$ with $F_{j,r+1}^{e},$ $...,$ $F_{j,m_{e}^{\prime}}^{e}$
with $F_{(j+1)\operatorname{mod}k,1}^{e},$ via gluing homeomorphisms of $M.$
Thus, we obtain a manifold $M_{e}^{\prime}$ with boundary which is a $k$
branched cover of $M_{e}$ with branch locus the edge $\overline{e}.$ If we
denote by $\overline{e}^{\prime}$ the preimage of $\overline{e}$ in
$M_{e}^{\prime}$ we have that the index $i_{\mathcal{D}_{e}^{\prime}%
}(\overline{e}^{\prime})\geq km_{e},$ where $\mathcal{D}_{e}^{\prime}$ denotes
the ideal triangulation of $M_{e}^{\prime}$ and therefore $i_{\mathcal{D}%
_{e}^{\prime}}(\overline{e}^{\prime})\geq6$ for a suitable $k.$

Now, let $m_{0}=m_{e_{1}}\cdot m_{e_{2}}\cdot...\cdot m_{e_{n}},$ where
$m_{e_{i}}$ is a weight of $e_{i}$ on the boundary of $B$ and let
$k_{0}=k\cdot m_{0},$ where $k$ is a positive integer. Considering $k_{0}$
copies of $B,$ we may apply our previous method for the construction of
$M_{e_{i}}^{\prime},$ simultaneously for all $e_{i}.$ We take in this way a
triangulated ideal manifold $M^{\prime}$ without boundary, equipped with an
ideal triangulation $\mathcal{D}^{\prime},$ such that, $M^{\prime}$ is a
branched cover of $M$ with branched locus the edges $e_{i}$ of $\mathcal{D}.$
Obviously we may choose the integer $k$ above so that $i_{D^{\prime}%
}(e^{\prime})\geq6$ for each edge $e^{\prime}$ of $\mathcal{D}^{\prime}.$
\end{proof}

\noindent \textbf{Remark }Proposition \ref{finite branched covering} proves
just the existence of branched coverings and does not deal with the problem of
finding the "best" branched coverings i.e. branched coverings which have
minimal number of sheets around the edges or which have some other specific
properties. A similar remark is also valid for Proposition
\ref{finite covering}.

\section{Proof of the main theorem}

In order to prove Theorem \ref{main} we need two auxiliary lemmata.

Assuming that $\Delta_{0}$ is a regular, hyperbolic, ideal tetrahedron we have
the following:

\begin{lemma}
\label{special disc types}$(a)$ A triangular disc type in $\Delta_{0}$ can be
isotoped, via disc types, to an equilateral Euclidean triangle $P.$

$(b)$ A quadrilateral disc type in $\Delta_{0}$ can be isotoped, via disc
types, to a square $Q$ which is a geodesic surface in $\Delta_{0}$ and which
has all its angles equal to $\pi/3.$
\end{lemma}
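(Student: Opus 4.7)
The overall plan is to use the symmetries and horospherical structure of the regular hyperbolic ideal tetrahedron $\Delta_{0}$. Since any two disc types of the same combinatorial class in $\Delta_{0}$ are isotopic to one another through disc types, it suffices to produce a canonical representative of each class with the required geometric shape and then invoke this standard fact.

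For $(a)$: take $P$ to be a horospherical cross-section of $\Delta_{0}$ near the ideal vertex $v$ that the triangular disc type cuts off. A horosphere of $\mathbb{H}^{3}$ inherits a flat Euclidean metric, and its intersection with $\Delta_{0}$ is a triangle whose three interior angles coincide with the three dihedral angles of $\Delta_{0}$ at the edges emanating from $v$. Regularity of $\Delta_{0}$ forces each of these dihedral angles to equal $\pi/3$, so $P$ is an equilateral Euclidean triangle. The trivial cusp neighbourhood of $v$ carries the product structure $P\times[0,\infty)$, and projecting the given triangular disc type along this product produces the desired isotopy through triangular disc types from it onto $P$.

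For $(b)$: use the $S_{4}$-action on $\Delta_{0}$ by hyperbolic isometries. The quadrilateral disc type determines a partition of the four ideal vertices into two pairs; its stabiliser in $S_{4}$ is a dihedral group $H\cong D_{4}$ of order eight. The Klein four subgroup of $H$ consists of three orientation-preserving $\pi$-rotations of $\mathbb{H}^{3}$ whose axes are the three common perpendiculars to the three pairs of opposite edges of $\Delta_{0}$, and these axes meet orthogonally at the barycentre of $\Delta_{0}$. Let $\Pi$ be the totally geodesic plane spanned by the two of these axes that correspond to the involutions in $H$ exchanging the two pairs, and set $Q=\Pi\cap\Delta_{0}$. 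Then $H$ preserves $\Pi$ setwise, the cyclic subgroup $\mathbb{Z}/4\subset H$ permutes the four sides of $Q$ transitively, so all four sides are congruent and all four vertex angles are equal; hence $Q$ is a regular geodesic quadrilateral sitting in a copy of $\mathbb{H}^{2}$ inside $\mathbb{H}^{3}$. To pin down the common vertex angle, I would work in the upper half-space model with one ideal vertex placed at $\infty$ and the other three at the cube roots of unity, determine $\Pi$ explicitly (it turns out to be a hemisphere), and compute directly the angle between the two geodesic arcs of $Q$ meeting at a vertex, arriving at $\pi/3$. Combined with the fact that any quadrilateral disc type of the prescribed combinatorial pairing is isotopic through disc types to $Q$, this proves $(b)$.

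The main obstacle is the angle computation in $(b)$: the symmetry argument produces four equal angles but not their exact value, and it is the explicit coordinate computation that must be carried out. This is where the geometry of the regular ideal tetrahedron enters essentially, in that the value $\pi/3$ reflects the tight interplay between the dihedral angles of $\Delta_{0}$ and the symmetric position of $\Pi$.
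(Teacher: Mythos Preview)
For part $(a)$ your argument is correct and essentially identical to the paper's: both take a horospherical section near the relevant ideal vertex and observe that regularity of $\Delta_{0}$ forces the three angles to equal $\pi/3$.

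For part $(b)$ you correctly identify the symmetric totally geodesic plane $\Pi$, and your $D_{4}$-symmetry argument cleanly shows that $Q=\Pi\cap\Delta_{0}$ is a regular hyperbolic quadrilateral; this is more careful than the paper's looser appeal to ``the symmetry of $\mathbb{B}^{3}$.'' However, the step you yourself flag as ``the main obstacle'' is exactly where the proposal breaks down: you assert, without carrying out the computation, that the vertex angle comes out to $\pi/3$, and in fact it does not. Place $\Delta_{0}$ in the ball model with ideal vertices at the four points $(\pm1,\pm1,\pm1)/\sqrt{3}$ having an even number of minus signs; then your plane is $\Pi=\{x=0\}$. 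One checks that $\Pi$ meets each of the four crossed edges at an angle of $\pi/4$ rather than $\pi/2$ (for instance, the tangent to the edge $ac$ at the crossing point $(0,\sqrt{3}-\sqrt{2},0)$ points in the direction $(1,0,1)$, not the normal direction $(1,0,0)$), so the vertex angle of $Q$ strictly exceeds the dihedral angle $\pi/3$. A direct computation gives side length $\operatorname{arccosh}(3/2)$ and vertex angle $\arccos(1/5)\approx 78.5^{\circ}$. The paper's own proof avoids your computation by asserting that $\Pi$ meets each crossed edge perpendicularly, which would indeed yield $\pi/3$; but that perpendicularity claim fails for the same reason. Thus the exact value $\pi/3$ is not realised by any totally geodesic square disc type in $\Delta_{0}$; what is true, and what actually suffices for the application in Theorem~\ref{incompressible surface}, is that the four vertex angles of $Q$ are equal and at least $\pi/3$.
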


\begin{proof}
$(a)$ A triangular disc type in $\Delta_{0}$ can be isotoped to a
horospherical section $P$ in the neighborhood of an ideal vertex of
$\Delta_{0}.$ Obviously, $P$ is an equilateral Euclidean triangle.

$(b)$ We consider the regular, hyperbolic, ideal tetrahedron $\Delta_{0}$ in
the hyperbolic ball model $\mathbb{B}^{3}=\{(x,y,z)\in$ $\mathbb{R}^{3}%
:x^{2}+y^{2}+z^{2}<1\}.$ Considering the geodesic plane $\Pi$ passing trough
the origin $O$ and perpendicular to an edge $e$ of $\Delta_{0}$ the
intersection $\Delta_{0}\cap \Pi$ defines a disc type $Q$. The symmetry of
$\mathbb{B}^{3}$ permits to prove that $Q$ intersects perpendicularly each
edge $e^{\prime}$ of $\Delta_{0}$ with $e^{\prime}\cap Q\neq \varnothing.$
Therefore all the angles of $Q$ are equal to $\pi/3.$ Also, it is easily
verified that all the sides of $Q$ are equal, so $Q$ is a hyperbolic square.
\end{proof}

Now, let $T$ be an ideal, hyperbolic triangle in $\mathbb{H}^{2}$ and let $p,$
$q,$ $r$ be the points in the sides of $T$ where the inscribed circle $C$ in
$T$ intersects the sides. The following lemma can be proven easily, by
elementary computations.

\begin{lemma}
\label{computational lemma}If $h_{0}$ is the length of the greatest
horospherical arc in $T$ centered at an ideal vertex of $T,$ and if $l_{0}$ is
the distance between $p,q$ then $l_{0}<h_{0}.$
\end{lemma}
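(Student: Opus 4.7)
The plan is to reduce the lemma to an explicit computation in a normalized model. Since any two ideal hyperbolic triangles in $\mathbb{H}^{2}$ are congruent by an isometry, and both $h_{0}$ and $l_{0}$ are isometry invariants of $T$, I would place $T$ in the upper half-plane model with ideal vertices at $0$, $1$, and $\infty$, so that its three sides are the vertical rays $\{x=0\}$, $\{x=1\}$, and the Euclidean semicircle $S=\{|z-1/2|=1/2\}$ joining $0$ to $1$.

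To evaluate $h_{0}$, I would examine horocycles (horospheres in $\mathbb{H}^{2}$) based at the vertex $\infty$, namely the horizontal lines $\{y=c\}$. A connected arc of such a horocycle lying in $T$ is the full segment $[0,1]\times\{c\}$ precisely when $c\ge 1/2$, with hyperbolic length $1/c$, maximised at the critical value $c=1/2$ (the horocycle tangent to $S$ at $(1/2,1/2)$), where the length equals $2$. For $c<1/2$ the horocycle is cut by $S$ into two subarcs each of hyperbolic length strictly less than $1$, as follows from a short estimate, so the overall maximum at $\infty$ is $2$; by the isometric equivalence of the three ideal vertices one then obtains $h_{0}=2$.

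To evaluate $l_{0}$, I would locate the contact points of the inscribed circle by symmetry. Reflection of $T$ across $\{x=1/2\}$ forces the contact point on $S$ to be $(1/2,1/2)$; applying the order-three isometry $z\mapsto 1/(1-z)$, which cyclically permutes the ideal vertices $0\to 1\to \infty\to 0$, then sends this point successively to $(1,1)$ and $(0,1)$, yielding all three contact points. From the upper half-plane distance formula $d(z_{1},z_{2})=\operatorname{arccosh}\bigl(1+|z_{1}-z_{2}|^{2}/(2y_{1}y_{2})\bigr)$, a direct substitution shows that any two of these three points lie at hyperbolic distance $\operatorname{arccosh}(3/2)$, so $l_{0}=\operatorname{arccosh}(3/2)$.

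The desired inequality then reduces to the elementary numerical bound $\operatorname{arccosh}(3/2)<2$, equivalently $\cosh(2)>3/2$, which is obvious since $\cosh(2)=(e^{2}+e^{-2})/2>e^{2}/2>3$. The main burden of the proof is therefore purely computational: correctly identifying the contact points via the symmetries of $T$ and evaluating the two hyperbolic lengths. No conceptual obstacle arises, only the bookkeeping of the explicit coordinates.
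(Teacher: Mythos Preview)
Your proposal is correct and follows essentially the same approach as the paper: place the ideal triangle in the upper half-plane and compute both quantities explicitly. The paper's proof is terser---it uses the normalization with vertices $(-1,0)$, $(1,0)$, $\infty$, asserts $h_{0}=2$ without further comment, and bounds $l_{0}<\log 3$ (which follows from the triangle inequality through the incenter, since the inradius equals $\tfrac{1}{2}\log 3$)---whereas you work out the exact value $l_{0}=\operatorname{arccosh}(3/2)$ via the order-three symmetry; but the strategy is the same explicit half-plane computation.
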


\begin{proof}
In the hyperbolic half-plane we consider the ideal triangle $T$ with vertices
the points $(-1,0),$ $(1,0)$ and $\infty.$ Obviously $h_{0}=2$, while
$l_{0}<\log3.$ This proves the lemma.
\end{proof}

Now, we are able to prove the following theorem.

\begin{theorem}
\label{incompressible surface}Let $M$ be a triangulated ideal manifold
equipped with a topological ideal triangulation $\mathcal{D}.$ We assume that
$i_{\mathcal{D}}(e)\geq6$ for each edge $e$ of $\mathcal{D}$ and that $M$
contains a closed normal surface $S$ which is not a linking surface. Then $S$
is incompressible.
\end{theorem}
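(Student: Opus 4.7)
The plan is to derive a contradiction by endowing $S$ with a locally $CAT(0)$ metric and then applying the Cartan--Hadamard theorem in the universal cover $\widetilde{M_{d}}$. Suppose for contradiction there is a compressing disc for $S$; then by the loop theorem there exists an essential simple closed curve $\alpha \subset S$ bounding an embedded disc in $M$. Since $\alpha$ is null-homotopic in $M_{d}$, it lifts to a closed loop $\widetilde{\alpha}$ in $\widetilde{M_{d}}$, contained in a connected component $\widetilde{S}_{0}$ of the preimage of $S$. The goal is to show $\widetilde{S}_{0}$ is simply connected, for then $\widetilde{\alpha}$ bounds a disc in $\widetilde{S}_{0}$ whose projection is a disc on $S$ bounded by $\alpha$, contradicting the essentiality of $\alpha$ on $S$.

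To equip $S$ with the needed metric I would straighten each disc type using Lemma \ref{special disc types}: replace every quadrilateral disc type by the hyperbolic square $Q$ with all angles $\pi/3$, and every triangular disc type by a horospherical Euclidean equilateral triangle $P$ based at the corresponding cusp. Picking one horosphere at each cusp of $M_{d}$, horospherical arcs on a shared face match automatically for triangle--triangle adjacencies. For mixed adjacencies on a shared face, Lemma \ref{computational lemma} is essential: the inequality $l_{0} < h_{0}$ allows the horospheres to be shrunk so that the horospherical arc on the face has the same length as the geodesic side of the adjacent hyperbolic square, so the boundary arcs agree. The result is an embedded surface isotopic to $S$ carrying a piecewise Euclidean--hyperbolic path metric.

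The curvature analysis is then direct: the square pieces have constant curvature $-1$, the horospherical triangle pieces are flat, and the only potential singularities occur at the $0$-cells of $S$, i.e., the points where $S$ meets the $1$-skeleton of $\mathcal{D}$. At such a $0$-cell $v$ lying on an edge $e$ with $i_{\mathcal{D}}(e) = k \geq 6$, exactly $k$ disc types of $S$ meet $v$ and each contributes an angle of $\pi/3$ (both $P$ and $Q$ have all vertex angles equal to $\pi/3$), so the total cone angle is $k\pi/3 \geq 2\pi$. The link condition is satisfied, so $S$ is locally $CAT(0)$; being closed it is complete, and Cartan--Hadamard gives that its universal cover $\widetilde{S}$ is globally $CAT(0)$ and in particular simply connected. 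The lift of the inclusion to universal covers is a local isometry $\widetilde{S} \to \widetilde{M_{d}}$ between complete $CAT(0)$ spaces (using $CAT(-1) \Rightarrow CAT(0)$ for the ambient); since every local geodesic in a complete $CAT(0)$ space is a global geodesic, a standard local-geodesic/unique-geodesic argument forces this local isometry to be injective. Hence $\widetilde{S}$ maps homeomorphically onto the subspace $\widetilde{S}_{0}$, which is therefore simply connected, completing the contradiction.

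The step I expect to be the main obstacle is the compatibility of straightenings in neighboring tetrahedra across shared faces: the horosphere sizes at the cusps, constrained by adjacency with hyperbolic squares on shared faces, must be chosen coherently, and it is precisely the inequality $l_{0} < h_{0}$ of Lemma \ref{computational lemma} that guarantees such a simultaneous choice exists. A secondary technical concern is to treat carefully the non-smooth $1$-cells of $S$ (the edges of disc types lying in faces of $\mathcal{D}$) when invoking the local-isometry argument, and to verify that the induced path metric on $\widetilde{S}$ is indeed locally isometric to the restriction of the ambient metric along these piecewise-smooth loci.
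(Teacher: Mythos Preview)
Your setup through the $CAT(0)$ structure on $S$ matches the paper's: both use Lemmas \ref{special disc types} and \ref{computational lemma} to put a piecewise Euclidean--hyperbolic metric on $S$ with cone angle $\geq 2\pi$ at every vertex. The gap is in the passage to incompressibility, specifically the assertion that the lift $\widetilde{S} \to \widetilde{M_d}$ is a local isometry. The horospherical triangles $P$ are \emph{not} totally geodesic in their tetrahedra: a straight line in the intrinsic Euclidean metric of a horosphere is a horocycle in $\mathbb{H}^3$, not an ambient geodesic. Hence a geodesic of $\widetilde{S}$ that crosses a triangular piece is not a local geodesic of $\widetilde{M_d}$, and your injectivity argument (local geodesic $\Rightarrow$ global geodesic in $CAT(0)$) breaks down. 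A related symptom appears already at the matching step: on a face shared by a triangle piece and a square piece, the triangle contributes a horocyclic boundary arc while the square contributes a geodesic one; these are distinct curves in the face even when their lengths agree, so the ``straightened'' $S$ with these exact pieces is not literally an embedded surface in $M_d$. The $CAT(0)$ metric on $S$ is really an abstract combinatorial one, and there is no isometric (or locally isometric) map from it into $M_d$ to which your argument can be applied. You flagged the $1$-cells as the delicate point, but the obstruction is already present in the interior of the triangular $2$-cells.

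The paper gets around this by never claiming $S$ sits locally isometrically in $M_d$. It takes a closed geodesic $a_0$ in the abstract metric on $S$, analyzes its combinatorics relative to the cell structure (Claims 1--3: at each vertex $a_0$ leaves at least two edges on either side), and uses this to build an annulus $T_0$ of disc types and then a $3$-dimensional solid-torus $N_0$ assembled from full ideal tetrahedra of $\mathcal{D}$ glued along faces. Because tetrahedra are glued along totally geodesic faces and the exterior angle at every boundary edge of $N_0$ is $\geq \pi$, the map $N_0 \to M_d$ \emph{is} a local isometry; a closed geodesic of $N_0$ freely homotopic to $a_0$ is then a genuine local (hence global) geodesic of $M_d$, so $a_0$ is not contractible. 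The passage to the $3$-dimensional object $N_0$ is precisely the device that evades the horosphere problem your approach runs into.
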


\begin{proof}
The manifold $M$ equipped with the regular, ideal structure $d$ defined in
Section 2, becomes a regular, negatively curved cusped manifold and is denoted
by $M_{d}.$ Thus, each tetrahedron of the triangulation $\mathcal{D}$ of
$M_{d}$ is a regular, hyperbolic ideal tetrahedron.

\begin{figure}[ptb]
\begin{center}
\includegraphics[scale=0.9]{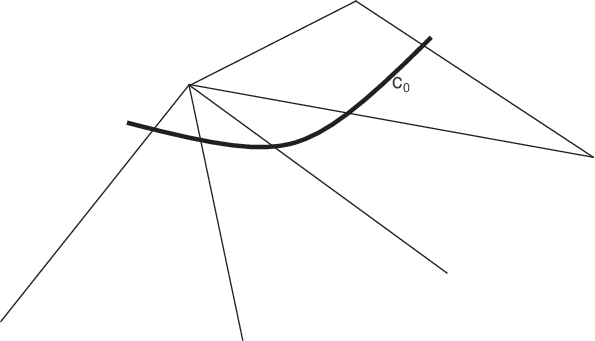}
\end{center}
\caption{A local image of a geodesic in the neighborhood of a vertex of $S$.}%
\end{figure}

The closed normal surface $S$ of $M_{d}$ can be isotoped to a surface, always
denoted by $S,$ which is of non-positive curvature. In fact, from Lemma
\ref{special disc types} each quadrilateral disc type can be isotoped to a
hyperbolic square with all its angles equal to $\pi/3$ and every triangular
disc type can be isotoped to an equilateral Euclidean triangle. Furthermore,
from Lemma \ref{computational lemma} all these geometric disc types can be
chosen to have sides of equal length and therefore they can be matched
together. Thus, the surface $S$ is equipped with a cell decomposition
$\mathcal{T}$ induced on $S$ from the ideal triangulation $\mathcal{D}$ of
$M_{d}.$ The angle around each vertex of $S$ is greater than $2\pi$ since
$i_{\mathcal{D}}(e)\geq6$ for each edge $e$ of $\mathcal{D}.$ On the other
hand, each geometric disc type is either Euclidean or hyperbolic. Therefore,
we deduce that $S$ is of non-positive curvature i.e. $S$ satisfies locally the
$CAT(0)$ inequality (see for example Theorem 3.13 in \cite{Paulin}).

Let $G\subset S$ be the graph formed by all the edges of $\mathcal{T}.$ Each
$v\in \mathcal{T}^{(0)}$ is a vertex of $G$ and let us denote by
$i_{\mathcal{T}}(v)$ the index of $v$ in the graph $G.$ Obviously we have
$i_{\mathcal{T}}(v)\geq6.$ In order to prove that $S$ is incompressible we
will use in a meaningful way that $S$ has non-positive curvature and that
$M_{d}$ has negative curvature.

Let $a$ be a simple, closed, essential curve in $S;$ we will show that $a$ is
non-contractible in $M_{d}.$ First we remark that we may replace $a$ by a
closed geodesic $c_{0}$ of $S$ which is freely homotopic to $a.$ Indeed, it is
well known that $S=\widetilde{S}/\Gamma,$ where $\widetilde{S}$ is the
universal covering of $S$ and $\Gamma$ a discrete group of isometries of
$\widetilde{S}$ acting freely on $\widetilde{S}.$ Furthermore, $\Gamma$ is
isomorphic to the fundamental group $\pi_{1}(S).$ Since $\widetilde{S}$ is a
$CAT(0)$ space, the curve $a$ defines an element $\phi$ of $\Gamma$ which is a
hyperbolic isometry of $\widetilde{S}.$ Therefore the geodesic of
$\widetilde{S}$ joining the points $\phi(-\infty)$ and $\phi(\infty)$ projects
to a closed geodesic $c_{0}$ of $S$ and it is easy to see that $c_{0}$ is
freely homotopic to $a$ (see for example Lemma 8 of \cite{CPT}).

\begin{figure}[ptb]
\begin{center}
\includegraphics[scale=0.7]
{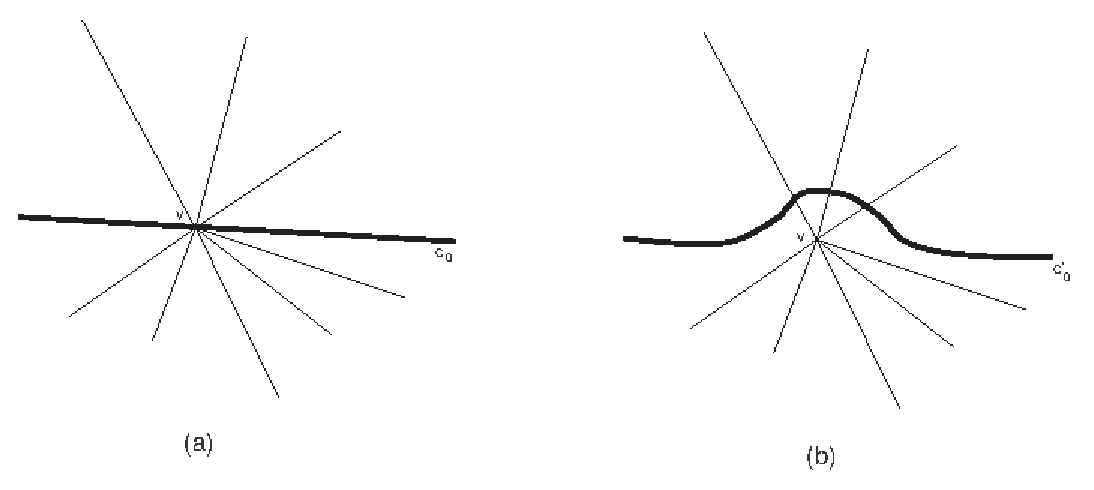}
%{Ideal5.eps}
\end{center}
\caption{A local modification of a geodesic in the neighborhood of a vertex.}%
\end{figure}

\textit{Claim 1}: If $c_{0}$ does not contain any vertex of $\mathcal{T}$ then
$c_{0}$ cannot intersect more than three consecutive edges of $\mathcal{T}$
abutting to the same vertex, see Figure 4.

Indeed, if Claim 1 is not true then a geodesic triangle would be formed in $S$
whose sum of angles would be strictly greater than $\pi.$ But this is
impossible since $S$ has non-positive curvature.

We assume now that $c_{0}$ contains some vertices of $\mathcal{T}$ and we
distinguish the following cases:

\begin{itemize}
\item (1) $c_{0}\subset \mathcal{T}^{(1)}$ i.e. $c_{0}$ consists of edges of
$\mathcal{T}.$

\item (2) There are edges $e,$ $e^{\prime}$ of $\mathcal{T}$ having a common
vertex $v$ such that $c_{0}$ contains $e$ but $c_{0}$ does not contain
$e^{\prime}.$

\item (3) $c_{0}$ passes through a vertex $v\in \mathcal{T}^{(0)}$ and $c_{0}$
does not contain any edge $e$ of $\mathcal{T}$ containing $v$ as a vertex.
\end{itemize}

\textit{Claim 2}: Let $v$ be a vertex of $\mathcal{T}$ with $v\in c_{0}$ and
let $c_{0}(t),$ $t\in \lbrack0,1]$ be a parametrization of $c_{0}$ with
$c_{0}(t_{0})=v.$ Then, we may find an open neighborhood $U$ of $v$ in $S$ and
an $\varepsilon>0$ such that $c_{0}((t_{0}-\varepsilon,t_{0}+\varepsilon))$
separates \ $U$ in two half-discs, say $U^{+},$ $U^{-}$ such that:

\begin{itemize}
\item In the case (1) there are at least two edges in the interior of $U^{+}$
(respectively in $U^{-})$ emanating from $v.$

\item In the cases (2) and (3) there are at least three edges in the interior
of $U^{+}$ (respectively in $U^{-})$ emanating from $v.$
\end{itemize}

Indeed, Claim 2 follows easily from the fact that $c_{0}$ is a geodesic of $S$
and thus the angle formed at $v$ in $S$ by the geodesic segments $[c_{0}%
(t_{0}-\varepsilon),c_{0}(t_{0})]$ and $[c_{0}(t_{0}),c_{0}(t_{0}%
+\varepsilon)]$ must be $\geq \pi$ at each side of $c_{0}.\smallskip$

Furthermore, in case (3) the following particular situation can happen: in
$U^{+}$ (or in $U^{-}$) there are exactly three edges emanating from $v,$ see
Figure 5(a). In this case the vertex $v$ will be referred to as a
\textit{specific vertex }of $c_{0}$ and we claim that:

\textit{Claim 3}: In case (3), we may isotope locally the geodesic $c_{0}$ in
$U$ and take a curve $c_{0}$ which traverses $U$ through $U^{+}$ and
intersects the three edges lying in $U^{+}$ in interior points; furthermore
$c_{0}^{\prime}$ does not intersect more than three consecutive edges of
$\mathcal{T}$ abutting at $v;$ see Figure 5(b).\smallskip

In case (1), $c_{0}\subset \mathcal{D}^{(2)}$ and $c_{0}$ in not homotopic to a
cusp of the 2-dimensional ideal polyhedron $\mathcal{D}^{(2)}$ (for the
definition of cusps in 2-dimensional ideal polyhedra, see Definition 1.8 in
\cite{ChTs}). Therefore $c_{0}$ can be isotoped in $\mathcal{D}^{(2)}$ to a
geodesic $\gamma_{0}$ of $\mathcal{D}^{(2)}.$ Let $e$ be an edge of
$\mathcal{D}$ and let $p\in \gamma_{0}\cap e.$ Then, there are at least six
elements of $\mathcal{D}$ i.e. regular hyperbolic tetrahedra of $\mathcal{D},$
which have $e$ as a common side and furthermore, locally in a neighborhood of
$p$ in $M,$ there are at least three elements of $\mathcal{D}$ at each side of
$\gamma_{0}.$ This implies that $\gamma_{0}$ is a geodesic of $M$ and hence
$\gamma_{0}$ is non-contractible in $M.$

In the following we will also show that $c_{0}$ is non-contractible in $M$ in
cases (2) and (3). Whether $c_{0}$ contains specific vertices the curve
$c_{0}$ will be used instead of $c_{0}.$ Our goal is to construct a surface
$T_{0}$ with the following properties:

$(i)$ $T_{0}$ is homeomorphic to an annulus $S^{1}\times \lbrack0,1]$ and
consists of geometric disc types of the triangulation $\mathcal{T}.$

$(ii)$ If we denote by $\mathcal{T}_{0}$ the triangulation of $T_{0},$ then
all the vertices of $\mathcal{T}_{0}$ belong to $\partial T_{0}$.

$(iii)$ If $v$ is a vertex of $\mathcal{T}_{0}$ then the number of edges of
$\mathcal{T}_{0}$ in the interior of $T_{0}$ abutting at $v$ is less than or
equal to $i_{\mathcal{T}}(v)-4.$

In fact, such an annulus $T_{0}$ with the above properties can be built by
gluing successively all disc types of $\mathcal{T}$ that $c_{0}$ transverses.
Observe here that a disc type can be used several times in the construction of
$T_{0}.$ Obviously $T_{0}$ satisfies properties $(i)$ and $(ii)$ above.
Property $(iii)$ follows from Claims (1) - (3).\smallskip

The following claim describes the relation of $T_{0}$ with $S.$

\textit{Claim 4}: There exists a map $f:T_{0}\rightarrow S$ such that:

$(i^{\prime})$ If $\Delta$ is a 2-cell of $\mathcal{T}_{0}$ then $f_{|\Delta
}:\Delta \rightarrow S$ is an isometry.

$(ii^{\prime})$ $f$ is a local isometry.

\textit{Proof of Claim 4}.

From the construction of $T_{0}$ statement $(i^{\prime})$ is clear. To prove
statement $(ii^{\prime})$ we remark first that $T_{0}$ is a geodesic space.
Now, let $U\subset T_{0}$ be an open convex neighborhood such that $f_{|U}$ is
an embedding. Let $p,q\in U$ and let $[p,q]$ be the unique geodesic segment
connecting $p$ and $q$ in $U.$ If $[p,q]$ belongs to the interior of $T_{0}$
then $[p,q]$ is obviously a geodesic segment of $S.$ If $[p,q]\cap \partial
T_{0}\neq \varnothing,$ then using the property $(iii)$ of $T_{0}$ above, we
will show that $[p,q]$ is also a geodesic segment in $S.$ In fact, if $w$ is a
vertex of $\mathcal{T}_{0}$ belonging to $[p,q]$ then the geodesic segments
$[w,p]$ and $[w,q]$ form at $w$ an angle $\geq \pi$ from each side of $[p,q]$
in $S.$ Therefore $[p,q]$ is also a geodesic segment in $S$ and thus the
distance $d_{T_{0}}(p,q)$ of $p,q$ in $T_{0}$ is equal to the distance
$d_{S}(p,q)$ of $p,q$ in $S.$ This implies that $f$ is a local isometry and
Claim 4 is proved.

The construction of $T_{0}$ permits to construct a 3-dimensional manifold
$N_{0}$ containing $c_{0}$ such that:

$(1^{\prime})$ $N_{0}$ consists of regular, hyperbolic ideal tetrahedra of
$\mathcal{D}$ and $T_{0}\subset N_{0}.$

$(2^{\prime})$ $N_{0}$ is homeomorphic to the solid torus $R=[0,1]\times
\lbrack0,1]\times S^{1}$ minus finitely many points removed from $\partial R.$

$(3^{\prime})$ Each edge $e$ of $\mathcal{D}_{0}$ is lying in $\partial
N_{0}.$

$(4^{\prime})$ For each edge $e$ of $\mathcal{D}_{0}$ there is an open
neighborhood $U_{N_{0}}(e)$ of $e$ in $N_{0}$ which is isometrically embedded
in $M.$

The construction of $N_{0}$ with the previous features follows from the
construction of $T_{0}.$ In fact, since every 2-cell of $\mathcal{T}_{0}$ is a
disc type in some hyperbolic ideal tetrahedron of $\mathcal{D}$ the
construction of $T_{0}$ leads naturally to the construction of $N_{0}$ having
properties $(1^{\prime})$-$(4^{\prime}).$ More precisely, two disc types of
$T_{0}$ which share a common edge, say $d,$ determine the two hyperbolic ideal
tetrahedra of $\mathcal{D}$ which are glued along a common face containing
$d.$ Furthermore, $N_{0}$ has negative curvature. In fact, this follows from
Proposition 11.6 of \cite{Bridson-Haefliger}, since $N_{0}$ is constructed by
gluing successively hyperbolic ideal tetrahedra along convex subsets. 

Let us denote by $\mathcal{D}_{0}$ the ideal triangulation of $N_{0}$ and let
$\phi(e,N_{0})$ be the angle around $e$ in $N_{0},$ which is defined as the
sum of all dihedral angles at $e$ of tetrahedra of $\mathcal{D}_{0}$ which
share $e$ as a common edge. Property $(4^{\prime})$ above allows us to define
the complement of this angle $\phi^{c}(e,N_{0})$ in $M_{d}.$ Then, Property
$(iii)$ of the surface $T_{0}$ implies that $\phi^{c}(e,N_{0})\geq \pi.$
Therefore, exactly as in Claim 4, we may prove that there exists a local
isometry $h:N_{0}\rightarrow M_{d}$.

Now, if $c_{0}$ is a cuspidal curve in $N_{0}$ it follows that $c_{0}$ will be
also a cuspidal curve in $M$. Therefore, from Lemma \ref{horospheres} $c_{0}$
is non-contractible in $M.$ If $c_{0}$ is not cuspidal then in the free
homotopy class of $c_{0}$ there is a closed geodesic $\gamma_{0}\subset$
$N_{0}.$ In fact, the lift of $c_{0}$ in the universal covering $\widetilde
{N_{0}}$ of $N_{0}$ defines two different points $\xi,\eta$ in the boundary
$\partial \widetilde{N_{0}},$ Then the geodesic of $\widetilde{N_{0}}$ which
joins $\xi$ and $\eta$ projects to $\gamma_{0}.$ Finally, since $\phi
^{c}(e,N_{0})\geq \pi$ for each edge $e$ of $\mathcal{D}_{0},$ using the same
arguments as in Claim 4, we deduce that $\gamma_{0}$ is also a geodesic of
$M_{d}.$ Therefore $c_{0}$ is not contractible in $M_{d}$ and thus $S$ is incompressible.
\end{proof}

Now we are able to prove our main theorem restated below.

\begin{theorem}
Let $M$ be an orientable 3-manifold triangulated by finitely many topological
ideal tetrahedra. If $M$ admits a regular, negatively curved, ideal
structure\textit{, }there exists a finite covering space $\widetilde{M}$ of
$M$ containing an essential closed surface.
\end{theorem}

\begin{proof}
From Proposition \ref{finite covering} and Theorem
\ref{existence normal surface}, there exists a finite covering space
$\widetilde{M}$ of $M$ equipped with a topological ideal triangulation such
that $\widetilde{M}$ contains a closed normal surface $S$ which is not a
linking surface. The covering $\widetilde{M}$ admits a regular, ideal
structure $d$ and becomes a regular, negatively curved cusped manifold having
an embedded normal surface $S.$ Hence, from Theorem
\ref{incompressible surface}, $S$ is essential.
\end{proof}

\end{document}